\documentclass[12pt]{article}

\usepackage{amsmath, amsthm, amssymb}
\usepackage{mathrsfs}
\usepackage{hyperref}
\usepackage{geometry}
\usepackage{enumerate}
\hypersetup{
    colorlinks=true,
    linkcolor=blue,
    citecolor=blue,
    urlcolor=blue
}

\newtheorem{theorem}{Theorem}[section]

\newtheorem{proposition}[theorem]{Proposition}

\theoremstyle{definition}
\newtheorem{definition}[theorem]{Definition}
\newtheorem{example}[theorem]{Example}

\theoremstyle{remark}

\newcommand{\A}{\mathcal{A}}
\newcommand{\doi}[1]{\href{https://doi.org/#1}{#1}}

\title{Structural Properties of $\varphi$-Semi Contractible Banach Algebras}
\author{
Hamid Sadeghi Nahrekhalaji\thanks{Email: h.sadeghi9762@gmail.com, sadeghi.9762@iau.ac.ir} \\
Department of Mathematics, Fari.C., Islamic Azad University, Isfahan, Iran
\and
Ali Ghafarpanah\thanks{Email: ghafarpanah@kazerunsfu.ac.ir, ghafarpanah2002@gmail.com} \\
Salman Farsi University of Kazerun, Kazerun, Iran
}
\date{}

\begin{document}

\maketitle

\begin{abstract}
In this paper, we introduce and investigate the notion of $\varphi$-semi contractibility for Banach algebras. We establish a complete characterization of $\varphi$-semi contractibility through the existence of specific elements satisfying particular algebraic conditions. We thoroughly examine the hereditary properties of this concept, including its behavior under closed ideals, continuous homomorphisms with dense range, and projective tensor products. Notably, we prove that the projective tensor product $\mathcal{A}\widehat{\otimes}\mathcal{B}$ is $\varphi\otimes\psi$-semi contractible precisely when both $\mathcal{A}$ and $\mathcal{B}$ are semi contractible with respect to their respective characters, where $\varphi \in \Delta(\mathcal{A})$ and $\psi \in \Delta(\mathcal{B})$. Furthermore, we introduce and analyze the approximate version of $\varphi$-semi contractibility, providing equivalent characterizations and investigating its stability under tensor products. \vspace{0.2 cm} \\
{\bf Mathematics Subject Classification (2020):} 46H20, 46H25.\\
{\bf Key words:} Banach algebras; $\varphi$-amenability; semi-inner derivation; tensor product.
\end{abstract}
\maketitle
{\footnotesize\noindent{\it Article history:}\\
Received: Month x, year\\
Received in revised form: Month x, year\\
Accepted: Month x, year}

\section{Introduction and preliminaries}
The study of amenability-like properties for Banach algebras relative to a character $\varphi$ has produced a rich family of notions (e.g.\ $\varphi$-amenability, character-amenability, $\varphi$-contractibility) that sit strictly between classical amenability and trivial one-dimensional behavior. 
These ideas were developed and popularized in a sequence of works; in particular the systematic study of $\varphi$-amenability was initiated by Kaniuth--Lau--Pym \cite{KLP1, KLP2}. 
In fact, they generalized the left amenability of Lau algebras \cite{L} for Banach algebras in general. Related forms of character amenability and contractibility were investigated by Monfared \cite{M}, Hu--Monfared--Traynor \cite{HMT} and others. See also \cite{Az}. 
The definitions and permanence results we give here are aligned with that literature and are intended to be compatible with standard cohomological language. 

Our objective is to define a weakening  of $\varphi$-contractibility that still yields tractable structural consequences and is easy to verify in many examples. 
The basic idea is to replace the requirement that all derivations into a certain class of modules are inner, by the weaker requirement that they are semi-inner (a difference of two module multiplications). 
This relaxation turns out to be robust (closed under many standard constructions) and yet non-trivial.

We adopt standard notation: $\A$ is a Banach algebra, $\A^*$ its dual, and $\Delta(\A)$ denotes the character space (non-zero multiplicative linear functionals on $\A$). 
All modules are Banach $\A$-bimodules and derivations are continuous unless stated otherwise. 

A Banach algebra $\mathcal{A}$ is called $\varphi$-amenable, for $\varphi\in\Delta(\mathcal{A})$, if there exists a bounded linear functional $m$ on $\mathcal{A}^*$ such that $m(\varphi)=1$ and $m(f\cdot a)=\varphi(a) m(f)$ for each $a\in \mathcal{A}$ and $f\in \mathcal{A}^*$.
 
Let $X$ be a Banach $\mathcal{A}$-bimodule. 
A continuous linear map $D:\A\to X$ is a derivation if
\[
D(ab)=a\cdot D(b)+D(a)\cdot b\qquad(a,b\in\A).
\]
Given $x\in X$, the mapping $\mathrm{ad}_x:\mathcal{A}\to X$ defined by $\mathrm{ad}_x(a)=a \cdot x - x \cdot a$ is a derivation which called inner derivation associated with $x$.
Let $Z^1(\mathcal{A},X)$ denote the space of all continuous derivations and $N^1(\mathcal{A},X)$ the space of all inner derivations from $\mathcal{A}$ into $X$.  
The first cohomology group $H^1(\mathcal{A},X)$ is the quotient $Z^1(\mathcal{A},X)/N^1(\mathcal{A},X)$. 

We also note that if $X$ is a Banach $\mathcal{A}$-bimodule, then the dual $X^*$ of $X$ has a natural Banach $\mathcal{A}$-bimodule structure given by 
$$  (a\cdot f)(x) =  f(x\cdot a ), \ \ \ \ \   (f\cdot a)(x) =  f(a\cdot x) \ \ \ \ \ (a\in\mathcal{A}, x\in X, f\in X^*).$$
In \cite{KLP1}, $\varphi$-amenability was characterized in several different ways. 
In fact, $\mathcal{A}$ is $\varphi$-amenable if and only if for every Banach $\mathcal{A}$-bimodule $X$ such that $a\cdot x=\varphi(a) x$ for $a\in\mathcal{A}, x\in X$, we have $H^1(\mathcal{A},X)=\{0\}$ \cite[Theorem 1.1]{KLP1}.
In other words, $\mathcal{A}$ is $\varphi$-amenable if and only if there exists a bounded net $(u_\alpha)$ in $\mathcal{A}$ 
such that $\| au_\alpha-\varphi(a) u_\alpha \|\to 0$ for all $a\in \mathcal{A}$, and $\varphi(u_\alpha)=1$ for all $\alpha$ \cite[Theorem 1.4]{KLP1}.

Another notion related to this amenability is $\varphi$-contractibility of $\mathcal{A}$ for $\varphi\in\Delta(\mathcal{A})$ which was introduced and studied by Hu et al. \cite{HMT}.
In fact, $\mathcal{A}$ is $\varphi$-contractible if there exists an element $m$ in the projective tensor product $\mathcal{A}\hat{\otimes}\mathcal{A}$ such that
$$\varphi(\pi(m))=1 \ \ \ \ \ \  and \ \ \ \ \ \ a\cdot m=\varphi(a) m,$$
for any $a\in\mathcal{A}$. We note that $\pi$ is the usual products morphism from $\mathcal{A}\hat{\otimes}\mathcal{A}$ into $\mathcal{A}$ given by $\pi(a\otimes a)=ab$ for all $a,b\in\mathcal{A} $.
For more information about $\varphi$-amenability and related concepts we refer the readers 
 to \cite{KLP1, KLP2, HMT, ASW, LS, M, SM}. Also, the first author has recently studied the essential character contractibility of Banach algebras \cite{S}.
 
 Recent developments in the theory of amenability-type properties of Banach algebras have focused on generalized notions such as character amenability, module amenability, and various approximate and pseudo versions of amenability (see \cite{DS, TR, STB}). These developments have led to new insights into the homological properties of Banach algebras and their tensor products, dual structures, and module actions. The present work contributes to this line of research by introducing and studying the notion of $\varphi$-semi contractibility and its approximate counterpart. 

Our work is organized as follows. In Section 2, we present the main definitions and establish a fundamental characterization of $\varphi$-semi contractibility and  explore its relationship with other amenability-type properties. In Section 3, investigate hereditary properties, including behavior under projective tensor products, quotients, and closed ideals. In Section 4, we introduce the approximate version of $\varphi$-semi contractibility and prove a corresponding characterization theorem. Moreover, we prove  $\mathcal{A} \widehat{\otimes} \mathcal{B}$ is approximately $\varphi \otimes \psi$-semi contractible if and only if $\mathcal{A}$ is approximately $\varphi$-semi contractible and $\mathcal{B}$ is approximately $\psi$-semi contractible.

\section{Foundations of $\varphi$-Semi Contractibility}
The concept of semi-inner derivations provides a natural bridge between inner derivations and arbitrary derivations. Recall from \cite{GL} that a derivation $D: \mathcal{A} \to X$ is called semi-inner if there exist $x_1, x_2 \in X$ such that
$D=\mathrm{ad}_{x_1,x_2}$, i.e.  $D(a) = a \cdot x_1 - x_2 \cdot a$ for all $a \in \mathcal{A}$. This notion motivates our central definition.

\begin{definition}
Let $\mathcal{A}$ be a Banach algebra and $\varphi \in \Delta(\mathcal{A})$. We say that $\mathcal{A}$ is \emph{$\varphi$-semi contractible} if every derivation $D : \mathcal{A} \to X$ into any Banach $\mathcal{A}$-bimodule $X$ with right module action given by $x \cdot a = \varphi(a)x$ is semi-inner.
\end{definition}

This definition relaxes the requirement of $\varphi$-contractibility by allowing two elements to witness the "almost inner" nature of derivations, rather than requiring a single element that simultaneously serves both roles. 
The following theorem states a condition equivalent to the $\varphi$-semi contractibility of the Banach  algebra $\mathcal{A}$.

\begin{theorem}\label{22}
\rm Let $\mathcal{A}$ be a Banach algebra and $\varphi\in\Delta(\mathcal{A})$. Then the following statements are equivalent:
\begin{enumerate}
\item $\mathcal{A}$ is $\varphi$-semi contractible.\label{t1:1}
\item There exist $m_1, m_2 \in \mathcal{A}$ such that $\varphi(m_1)=\varphi(m_2)=1$ and $am_1=\varphi(a) m_2$ for all $a\in \mathcal{A}$.\label{t1:2}
\end{enumerate}
\end{theorem}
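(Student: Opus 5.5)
For (2)$\Rightarrow$(1) I would write down the candidate pair directly. Take $m_1,m_2$ as in (2), let $X$ be a Banach $\mathcal{A}$-bimodule with $x\cdot a=\varphi(a)x$, and let $D:\mathcal{A}\to X$ be a derivation. Set $x_1=-D(m_1)$ and $x_2=-D(m_2)$; the plan is to check that $D=\mathrm{ad}_{x_1,x_2}$.

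Apply $D$ to both sides of $am_1=\varphi(a)m_2$. The left side gives
\[
D(am_1)=a\cdot D(m_1)+D(a)\cdot m_1=a\cdot D(m_1)+\varphi(m_1)D(a)=a\cdot D(m_1)+D(a).
\]
The right side gives $\varphi(a)D(m_2)$. Since the right action is $\varphi$, we have $\varphi(a)D(m_2)=D(m_2)\cdot a$. Hence
\[
D(a)=D(m_2)\cdot a-a\cdot D(m_1)=a\cdot x_1-x_2\cdot a,
\]
so $D$ is semi-inner. This direction is routine.

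For (1)$\Rightarrow$(2) I would build one specific module and derivation. Let $X=\mathcal{A}$ with left action given by multiplication and right action $x\cdot a=\varphi(a)x$; this is a Banach bimodule because $\varphi$ is multiplicative. Then pick $e\in\mathcal{A}$ with $\varphi(e)=1$ and define $D(a)=ae-\varphi(a)e$. This is inner ($D=-\mathrm{ad}_e$), hence a derivation; one can also check directly that $D(ab)=a\cdot D(b)+D(a)\cdot b$. The idea is that $D$ is trivially semi-inner, so (1) gives no information about it. To extract $m_1,m_2$ we need a derivation whose semi-inner representation is forced, which suggests working on $\ker\varphi$ or with a derivation that is not inner.

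The cleaner route is to take $X=\ker\varphi$ with left multiplication and right action $\varphi$; this is a closed submodule, since $a\ker\varphi\subseteq\ker\varphi$. Define $D(a)=ae-\varphi(a)e$, which maps into $\ker\varphi$. By (1) there are $x_1,x_2\in\ker\varphi$ with
\[
ae-\varphi(a)e=ax_1-\varphi(a)x_2\qquad(a\in\mathcal{A}).
\]
Put $m_1=e-x_1$ and $m_2=e-x_2$. Then $\varphi(m_1)=\varphi(m_2)=1$ because $x_1,x_2\in\ker\varphi$, and rearranging gives $am_1=\varphi(a)m_2$, which is (2).

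The main obstacle is choosing this module. In the full module $\mathcal{A}$ the witnesses could have arbitrary $\varphi$-values, so normalization would fail. Restricting to $\ker\varphi$ is exactly what pins down $\varphi(m_i)=1$. I therefore expect the key step to be verifying that $\ker\varphi$ is a legitimate Banach bimodule of the required type and that $D$ lands in it; both checks are immediate.
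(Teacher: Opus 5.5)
Your proposal is correct and follows essentially the same route as the paper. For (1)$\Rightarrow$(2) you use the same bimodule $\ker\varphi$, the same derivation $a\mapsto ae-\varphi(a)e$ and the same witnesses $m_i=e-x_i$; for (2)$\Rightarrow$(1) you apply $D$ to $am_1=\varphi(a)m_2$ exactly as the paper does. The only blemish is in your discarded aside: in the module $\mathcal{A}$ with right action $\varphi$, that derivation equals $\mathrm{ad}_e$, not $-\mathrm{ad}_e$, which does not affect the argument.
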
 

\begin{proof}
(\ref{t1:1})$\Rightarrow $(\ref{t1:2}) Suppose that $\mathcal{A}$ is $\varphi$-semi contractible. We first note that $\ker\varphi$ is a closed ideal in $\mathcal{A}$ of codimension 1 and in fact a Banach $\mathcal{A}$-bimodule through the following actions
$$a\cdot x=ax,\ \ x\cdot a=\varphi(a)x\ \ (a\in \mathcal{A}, x\in\ker\varphi).$$
Let $u\in \mathcal{A}$ be such that $\varphi(u)=1$. Define the map $D:\mathcal{A}\to \ker\varphi$ by $D(a)=au-\varphi(a)u$. In this case
\begin{align*}
a\cdot D(b)+ D(a)\cdot b &= a\cdot (bu-\varphi(b)u)+(au-\varphi(a)u)\cdot b\\
&= abu-\varphi(b)au+\varphi(b)au-\varphi(b)\varphi(a)u\\
&= abu-\varphi(ab)u=D(ab),
\end{align*}
so $D$ is a derivation and therefore there are $x_1,x_2\in\ker\varphi$ such that $D=ad_{x_1,x_2}$. Now let $m_1=u-x_1$ and $m_2=u-x_2$.  We have $\varphi(m_1)=\varphi(m_2)=1$ and for any $a\in\mathcal{A}$,
\begin{align*}
am_1-\varphi(a)m_2 &= au-ax_1-(\varphi(a)u-\varphi(a)x_2)\\
&= au-\varphi(a)u-(a\cdot x_1-x_2\cdot a)\\
&= D(a)-D(a)=0
\end{align*}
and so $am_1=\varphi(a)m_2$.

(\ref{t1:2})$\Rightarrow$(\ref{t1:1}) Suppose that $D:\mathcal{A}\to X$ is a derivation for some Banach $\mathcal{A}$-bimodule $X$ with right action $x\cdot a=\varphi(a) x$. For any $a\in \mathcal{A}$ we have
\begin{align*}
a\cdot D(m_1)-D(m_2)\cdot a &= D(a\cdot m_1)-D(a)\cdot m_1-D(m_2)\cdot a\\
&= \varphi(a) D(m_2)-D(a)-\varphi(a) D(m_2)\\
&= -D(a).
\end{align*}
It follows that $D(a)=a\cdot(-D(m_1))-(-D(m_2))\cdot a$ or equivalently\linebreak  $D=\mathrm{ad}_{(-D(m_1),-D(m_2))}$ and thus $\mathcal{A}$ is $\varphi$-semi contractible by definition.
\end{proof}

This characterization provides a concrete algebraic criterion for $\varphi$-semi contractibility that is often easier to verify than the original definition.

\begin{example}
Let $A$ be the vector space of all complex $3 \times 3$ matrices of the form
$$ a =
\begin{pmatrix}
0 & x & 0 \\
0 & 0 & 0 \\
0 & 0 & \lambda
\end{pmatrix},
\qquad x, \lambda \in \mathbb{C}.
$$
Clearly, $A$ with the norm $\|a\| = |x| + |\lambda|$ is a Banach algebra. We define the following map
$$ \varphi : A \to \mathbb{C}, \qquad 
\varphi\!\left(
\begin{pmatrix}
0 & x & 0 \\
0 & 0 & 0 \\
0 & 0 & \lambda
\end{pmatrix}
\right)
= \lambda,
$$
for $x, \lambda \in \mathbb{C}$. Note that $\varphi$ is a character on $A$.
Let
$$ m_1 =
\begin{pmatrix}
0 & 1 & 0 \\
0 & 0 & 0 \\
0 & 0 & 1
\end{pmatrix},
\qquad
m_2 =
\begin{pmatrix}
0 & 0 & 0 \\
0 & 0 & 0 \\
0 & 0 & 1
\end{pmatrix}.
$$
Then $\varphi(m_1) = \varphi(m_2) = 1$, and for every
$ a = \begin{pmatrix}
0 & x & 0 \\
0 & 0 & 0 \\
0 & 0 & \lambda
\end{pmatrix} $ in $A$, we have
$$
a m_1 =
\begin{pmatrix}
0 & 0 & \lambda \\
0 & 0 & 0 \\
0 & 0 & \lambda
\end{pmatrix}
= \lambda
\begin{pmatrix}
0 & 0 & 1 \\
0 & 0 & 0 \\
0 & 0 & 1
\end{pmatrix}
= \varphi(a)m_2.
$$
So $A$ is $\varphi$-semi contractible.
\end{example}

\begin{example}
let $\mathcal{A}=\mathbb{C}^2$ with multiplication $(a,b)(c,d)=(ac,0), (a,b,c,d\in \mathbb{C})$ and the norm $\|(a,b)\|=|a|+|b|$. Define the character $\varphi(a,b)=a, (a, b\in \mathbb{C})$. Choose distinct elements $m_1=(1,2)$ and $m_2=(1,0)$. Then $\varphi(m_1)=\varphi(m_2)=1$. Now for any $(a,b)\in \mathcal{A}$ we have
$(a,b)m_1=(a,0)=\varphi(a,b)m_2.$ So $\mathcal{A}$ is $\varphi$-semi contractible.

\end{example}

To better understand the position of $\varphi$-semi contractibility in the hierarchy of amenability-type properties, we introduce and relate it to other relevant concepts.

Let $ \mathcal{A} $ be a Banach algebra. An \( \mathcal{A} \)-bimodule \( X \) is called dual if there is a closed submodule \( X_* \) of \( X^* \) such that $X = (X_*)^*$.   
A Banach algebra \( \mathcal{A} \) is called dual if it is dual as a Banach \( \mathcal{A} \)-bimodule. 
A dual Banach \( \mathcal{A} \)-bimodule \( X \) is normal if for each \( x \in X \) the module maps  
$
\mathcal{A} \to X$;  $a \mapsto a \cdot x$ and  $a \mapsto x \cdot a $  
are  weak$^*$-continuous.
Furthermore, a Banach \( \mathcal{A} \)-bimodule \( X \) is neo-unital if  $X = \mathcal{A} \cdot X \cdot \mathcal{A}$.  

\begin{definition}
Let $\mathcal{A}$ be a Banach algebra and $\varphi \in \Delta(\mathcal{A})$.
\begin{enumerate}
\item $\mathcal{A}$ is called $\varphi$-semi amenable  if  for every Banach $\mathcal{A}$-bimodule $X$ with the left  module action $a \cdot x = \varphi(a)x,$ $(a\in \mathcal{A}, x\in X)$ every  derivation $ D: \mathcal{A} \to X^* $ is semi-inner.

\item If $\mathcal{A}$ is a dual Banach algebra and $\varphi \in \Delta_{w^*}(\mathcal{A})$ (the weak$^*$-continuous characters), then $\mathcal{A}$ is called $\varphi$-semi Connes amenable if for every normal dual Banach $\mathcal{A}$-bimodule $X=(X_*)^*$ with neo-unital $X_*$ and the right module action $x \cdot a = \varphi(a)x$, $(a\in \mathcal{A}, x\in X)$,
every weak$^*$-continuous derivation $ D: \mathcal{A} \to X $ is semi-inner.

\end{enumerate}
\end{definition}

The following result clarifies the relationship between these concepts for dual Banach algebras.

\begin{theorem}
Suppose that $\mathcal{A}$ is a dual Banach algebra and $\varphi \in \Delta_{w^*}(\mathcal{A})$.
Then $\mathcal{A}$ is $\varphi$-semi contractible if and only if $\mathcal{A}$ is $\varphi$-semi Connes amenable.
\end{theorem}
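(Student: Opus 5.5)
The plan is to prove both directions through the algebraic criterion of Theorem \ref{22}: $\mathcal{A}$ is $\varphi$-semi contractible if and only if there exist $m_1,m_2\in\mathcal{A}$ with $\varphi(m_1)=\varphi(m_2)=1$ and $am_1=\varphi(a)m_2$ for all $a\in\mathcal{A}$.

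\emph{($\Rightarrow$)} Assume $\mathcal{A}$ is $\varphi$-semi contractible and take $m_1,m_2$ as in Theorem \ref{22}. Let $X=(X_*)^*$ be a normal dual bimodule with $x\cdot a=\varphi(a)x$, and let $D:\mathcal{A}\to X$ be a weak$^*$-continuous derivation. The computation in the proof of Theorem \ref{22}, (\ref{t1:2})$\Rightarrow$(\ref{t1:1}), is purely algebraic. It uses only the derivation identity and the right action, and does not use continuity or duality at all. It therefore gives
\[
D=\mathrm{ad}_{(-D(m_1),-D(m_2))}.
\]
Hence $\mathcal{A}$ is $\varphi$-semi Connes amenable. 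This direction is routine.

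\emph{($\Leftarrow$)} This is the main step. I would mimic (\ref{t1:1})$\Rightarrow$(\ref{t1:2}) of Theorem \ref{22}, but now the target module must be a normal dual module with neo-unital predual. Since $\mathcal{A}$ is dual, write $\mathcal{A}=(\mathcal{A}_*)^*$. Because $\varphi$ is weak$^*$-continuous, $\ker\varphi$ is weak$^*$-closed, so it is itself a dual space: $\ker\varphi=(\mathcal{A}_*/{}^{\perp}\!\ker\varphi)^*$, or equivalently the annihilator description. Give $\ker\varphi$ the actions $a\cdot x=ax$ and $x\cdot a=\varphi(a)x$. Normality then follows from two facts: multiplication in a dual Banach algebra is separately weak$^*$-continuous, and $\varphi$ is weak$^*$-continuous. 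Next fix $u$ with $\varphi(u)=1$ and set $D(a)=au-\varphi(a)u$. This is a derivation, as shown before, and it is weak$^*$-continuous for the same reasons. Semi Connes amenability then yields $x_1,x_2\in\ker\varphi$ with $D=\mathrm{ad}_{x_1,x_2}$. Setting $m_i=u-x_i$, the same algebra as in Theorem \ref{22} gives $am_1=\varphi(a)m_2$ and $\varphi(m_i)=1$. Theorem \ref{22} then shows that $\mathcal{A}$ is $\varphi$-semi contractible.

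The expected obstacle is the neo-unital hypothesis on the predual $X_*$. It is not clear that the predual of $\ker\varphi$ equals $\mathcal{A}\cdot X_*\cdot\mathcal{A}$.

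\begin{itemize}
\item \emph{Right side.} The right action on $X$ is multiplication by $\varphi$, so the induced left action on $X_*$ is $a\cdot f=\varphi(a)f$. Choosing $a$ with $\varphi(a)=1$ makes the right factor harmless.
\item \emph{Left side.} Here $\mathcal{A}$ may lack an identity, and this is the real difficulty. My first attempt would be to replace the module by a smaller one on which neo-unitality is automatic. I would use the weak$^*$-closure of $\mathcal{A}\ker\varphi$, or equivalently pass to the essential part of the predual, and check that $D$ still lands there, since $D(a)=au-\varphi(a)u$. The conclusion $am_1=\varphi(a)m_2$ only requires semi-innerness on a module containing the range of $D$.
\item \emph{Fallback.} Failing that, I would take $X=\mathbb{C}\oplus\ker\varphi$-type modules built from $\varphi$, or argue via Cohen factorization when a bounded approximate identity is available. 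In any case I would flag this as the point where the argument needs care.
\end{itemize}
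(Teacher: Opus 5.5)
Your $(\Rightarrow)$ direction is fine. The paper just observes that a weak$^*$-continuous derivation is bounded and applies the definition; your reuse of the purely algebraic computation from Theorem~\ref{22} works equally well. Your $(\Leftarrow)$ direction is the paper's own argument, and the step you flag is a genuine gap that neither you nor the paper closes. The paper says ``obviously'' $\ker\varphi$ is admissible and never checks neo-unitality. The predual is $X_*=\mathcal{A}_*/\mathbb{C}\varphi$, with $a\cdot[g]=\varphi(a)[g]$ and $[g]\cdot a=[g\cdot a]$, where $(g\cdot a)(y)=g(ay)$. Nothing makes this neo-unital without some kind of left unit.

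In fact the gap cannot be filled, because $(\Leftarrow)$ is false as stated. Let $\mathcal{A}=\operatorname{span}\{e,n\}\subseteq M_2(\mathbb{C})$ with $e=E_{11}$ and $n=E_{21}$, so $e^2=e$, $ne=n$ and $en=n^2=0$. Let $\varphi(\lambda e+\mu n)=\lambda$.
\begin{itemize}
\item Since $\mathcal{A}$ is finite-dimensional, it is a dual Banach algebra, $\varphi\in\Delta_{w^*}(\mathcal{A})$, and every dual module is automatically normal.
\item $\mathcal{A}$ is not $\varphi$-semi contractible: any $m_1=e+tn$ gives $nm_1=n\neq 0=\varphi(n)m_2$.
\item $\mathcal{A}$ is $\varphi$-semi Connes amenable. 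For admissible $X=(X_*)^*$, put $R_bf=f\cdot b$ on $X_*$, so $R_{ab}=R_bR_a$. From $ne=n$ and $e^2=e$ we get $R_eR_b=R_b$ for all $b$. Hence $\mathcal{A}\cdot X_*\cdot\mathcal{A}\subseteq R_e(X_*)$, and neo-unitality forces $R_e=I$. Then $en=0$ gives $R_n=R_nR_e=0$.
\item Dualizing, $a\cdot x=\varphi(a)x$ on $X$. So every derivation satisfies $D(ab)=\varphi(a)D(b)+\varphi(b)D(a)$. This gives $D(e)=D(e^2)=2D(e)=0$ and $D(n)=D(en)=0$, so $D=0=\mathrm{ad}_{0,0}$.
\end{itemize}

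The same example shows why your proposed repairs fail. Here $\mathcal{A}\ker\varphi=\{0\}$, so $\mathcal{A}$ acts by zero on the predual of $\ker\varphi$; this is exactly where admissibility breaks. Meanwhile $D(n)=nu-\varphi(n)u=n$ for every $u$ with $\varphi(u)=1$, so $D$ does not take values in the weak$^*$-closure of $\mathcal{A}\ker\varphi$. Also, $\mathcal{A}$ has no left identity, hence no bounded left approximate identity, so Cohen factorization has nothing to use. Your ``$\mathbb{C}\oplus\ker\varphi$-type'' fallback fails too, since every admissible module here carries only the zero derivation.

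What is missing is an extra hypothesis that makes $\mathcal{A}_*/\mathbb{C}\varphi$ neo-unital. A left identity $e_0$ in $\mathcal{A}$ suffices, since then $[g]=e_0\cdot[g]\cdot e_0$. Under such a hypothesis your argument, which coincides with the paper's, is complete.
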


\begin{proof}
Let $\mathcal{A}$ be a $\varphi$-semi contractible Banach algebra.
By definition of $\varphi$-semi amenability, $\mathcal{A}$ is $\varphi$-semi amenable.
Let $D : \mathcal{A} \to X$ be a weak$^*$-continuous derivation for some normal dual Banach $\mathcal{A}$-bimodule $X$ with 
$x \cdot a = \varphi(a)x$.
Clearly $D$ is bounded and so $D$ is semi-inner.
Thus $\mathcal{A}$ is $\varphi$-semi Connes amenable.

Conversely, suppose that $\mathcal{A}$ is $\varphi$-semi Connes amenable.
Obviously, $\ker \varphi$ with the right action $x \cdot a = \varphi(a)x$ and the natural left action 
is a normal dual Banach $\mathcal{A}$-bimodule.
Choose $e \in \mathcal{A}$ such that $\varphi(e) = 1$ and define 
$$ D(a) = ae - \varphi(a)e. $$
Then $D$ is a weak$^*$-continuous derivation from $\mathcal{A}$ into $\ker \varphi$.
By assumption, there are $x_1$ and $x_2$ in $\ker \varphi$ such that 
$ D = \mathrm{ad}_{x_1, x_2}$. 
Now let $m_1 = e - x_1$ and $m_2 = e - x_2$.
By a similar argument as in the proof of Theorem \ref{22}, we can show that
$ a m_1 = \varphi(a) m_2 \quad (a \in \mathcal{A}) $ 
and 
$ \varphi(m_1) = \varphi(m_2) = 1$. 
Therefore, $\mathcal{A}$ is $\varphi$-semi contractible.
\end{proof}

We now examine when semi-inner derivations become inner, which reveals connections with approximate identities and module structures.  
We recall from \cite{D} that a net $(e_\alpha)$ in $\mathcal{A}$ is a left  approximate identity for $X$ if $(e_\alpha)$ is a left approximate identity for $\mathcal{A}$ and also 
$\lim_\alpha  e_\alpha\cdot x =x$ $(x\in X)$.

\begin{proposition}\label{pro23}
Let $\mathcal{A}$ be a Banach algebra, $X$  a Banach $\mathcal{A}$-bimodule with right
  action $x\cdot a=\varphi(a)x$,  and 
$D:\mathcal{A}\to X^*$  a semi-inner derivation.  Then  $D$ is inner in each of the following cases: 
\begin{enumerate}
\item $\mathcal{A}$ has a left approximate identity for $X$.\label{p1:1}
\item $X$ is neo-unital.\label{p1:2}
\item $\mathcal{A}$ has a bounded approximate identity.\label{p1:3}
\end{enumerate}
\end{proposition}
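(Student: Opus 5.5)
The plan is to exploit the very special structure of $X^*$ coming from the right action on $X$. Since $x\cdot a=\varphi(a)x$, the dual left action is $(a\cdot f)(x)=f(x\cdot a)=\varphi(a)f(x)$, so $a\cdot f=\varphi(a)f$ for all $f\in X^*$. The right action on $X^*$ is the general one, $(f\cdot a)(x)=f(a\cdot x)$. Write $D=\mathrm{ad}_{f_1,f_2}$ with $f_1,f_2\in X^*$, so that $D(a)=\varphi(a)f_1-f_2\cdot a$. The goal is to produce a single $g\in X^*$ with $D(a)=a\cdot g-g\cdot a=\varphi(a)g-g\cdot a$, and the natural candidate is $g=f_1$ (or $g=f_2$).

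The key step is to feed the semi-inner form into the derivation identity $D(ab)=a\cdot D(b)+D(a)\cdot b=\varphi(a)D(b)+D(a)\cdot b$. Expanding both sides, the terms $\varphi(ab)f_1$ and $f_2\cdot ab$ cancel. What remains is
\[
\varphi(a)\bigl(f_1\cdot b-f_2\cdot b\bigr)=0\qquad(a,b\in\mathcal{A}).
\]
Choosing $a$ with $\varphi(a)=1$ gives $(f_1-f_2)\cdot b=0$ for every $b$, i.e. $f_1-f_2$ annihilates $\mathcal{A}\cdot X$. Then
\[
D(a)=\varphi(a)f_1-f_1\cdot a+(f_1-f_2)\cdot a=\mathrm{ad}_{f_1}(a),
\]
so $D$ is inner.

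This argument seems not to use any of the three hypotheses at all. I would therefore double-check the expansion carefully, since that is the only real obstacle; the worry is whether I have the dual actions the right way round. Once it is confirmed, each of (\ref{p1:1})--(\ref{p1:3}) follows immediately as a special case.

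For completeness, and to match the structure of the statement, I would also record the more direct conclusions available in each case.
\begin{itemize}
\item In case (\ref{p1:2}), $X=\mathcal{A}\cdot X\cdot\mathcal{A}\subseteq\overline{\mathcal{A}\cdot X}$, so $f_1-f_2$ vanishes on $X$ and in fact $f_1=f_2$.
\item In case (\ref{p1:1}), $f_1-f_2$ vanishes on each $e_\alpha\cdot x$, and these converge to $x$, so again $f_1=f_2$.
\item In case (\ref{p1:3}), the same conclusion holds on the essential part $\overline{\mathcal{A}\cdot X}$, and the general identity above already suffices.
\end{itemize}
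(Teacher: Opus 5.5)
Your proof is correct, and the expansion you wanted to double-check is right: with $a\cdot f=\varphi(a)f$ on $X^*$, the derivation identity for $D(a)=\varphi(a)f_1-f_2\cdot a$ reduces exactly to $\varphi(a)(f_1-f_2)\cdot b=0$. The paper derives this same identity as its first step. The two arguments differ in what is done with it.

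The paper does not fix a single $a_0$ with $\varphi(a_0)=1$. It splits into cases instead:
\begin{enumerate}
\item In case (1) it uses the left approximate identity, implicitly through $\varphi(e_\alpha)\to 1$, to write $D(a)=\lim_\alpha\varphi(e_\alpha)D(a)$.
\item In case (2) it writes $x=a_1\cdot x'\cdot a_2$ to force $n=m$.
\item In case (3) it invokes the decomposition $X^*=LRX^*\oplus L(I-R)X^*\oplus(I-L)X^*$, built from weak$^*$ limits of multiplication operators along a bounded approximate identity, and asserts that each piece of $D$ is inner.
\end{enumerate}

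Your choice of $a_0$ gives $(f_1-f_2)\cdot b=0$ for all $b$ directly, so $D=\mathrm{ad}_{f_1}$ with no hypothesis on $\mathcal{A}$ or $X$. This buys several things:
\begin{enumerate}
\item It proves a strictly stronger statement: every semi-inner derivation into such an $X^*$ is inner, so the three hypotheses are superfluous.
\item It makes the machinery of case (3) unnecessary.
\item It shows that the paper's case (1) computation is really your argument with a net in place of $a_0$.
\end{enumerate}
The only extra information the hypotheses give is that the two implementing elements coincide in cases (1) and (2), and your bullets already record this.

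One small slip: your parenthetical ``or $g=f_2$'' is wrong in general. Indeed $D(a)-\mathrm{ad}_{f_2}(a)=\varphi(a)(f_1-f_2)$, which need not vanish. For example, take the left action on $X$ to be zero; then $f\cdot a=0$ on $X^*$, and $f_2$ can be arbitrary. You do use $g=f_1$, which is the correct choice.
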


\begin{proof}
First we note that for any $a\in\mathcal{A}$ and $m\in X^*$ we have $a\cdot m=\varphi(a)m$.  Since $D$ is semi-inner, there are $m,n\in X^*$ such that for every $a\in\mathcal{A}$, $D(a)=a\cdot n-m\cdot a$. 
According to $D(ab)=a\cdot D(b)+D(a)\cdot b$, for $a,b\in\mathcal{A}$, we get  
$$\varphi(ab)n - m \cdot ab = \varphi(ab)n - a\cdot m\cdot b + \varphi(a)n\cdot b - m \cdot ab$$
and so $\varphi(a)(n-m)\cdot b = 0$. 

(\ref{p1:1})  Let $(e_\alpha)\in \mathcal{A}$ be a left approximate identity For $X$. So for every $a\in \mathcal{A}$,
\begin{align*}
D(a) &= a\cdot n-m\cdot a\\
&= \lim_\alpha \varphi(e_\alpha)(a\cdot n-m\cdot a)=\lim_\alpha \varphi(e_\alpha)(a\cdot n-n\cdot a+n\cdot a -m\cdot a)\\
&= \lim_\alpha (\varphi(e_\alpha)(a\cdot n-n\cdot a)+\varphi(e_\alpha)(n-m)\cdot a )\\
&= a\cdot n-n\cdot a.
\end{align*}
 That is D is inner.
 
(\ref{p1:2}) Let $x$ be an arbitrary element of $X$. By definition there exists $a_1,a_2 \in\mathcal{A}$ and $x'\in X$ such that 
$x=a_1\cdot x'\cdot a_2$. This and the fact that $\varphi(a)(n-m)\cdot b = 0$, for $a,b\in \mathcal{A}$, implies that
$$ \langle (n-m), x \rangle = \langle (n-m), a_1\cdot  x'\cdot a_2 \rangle = \langle \varphi(a_2)(n-m)\cdot a_1 , x' \rangle = 0. $$
This means that $n-m=0$. Thus $n=m$ and so $D$ is inner.
 
(\ref{p1:3}) Let $(e_\alpha)$ be  a bounded approximate identity of $\mathcal{A}$. 
Suppose that $L$ is a limit of the left multiplication operators $L_{e_\alpha}$ on $X^*$
 by the elements $e_\alpha$ in the weak$^*$ operator topology, and $R$ similarly for right multiplication.
By a similar argument of Proposition 3.3 of \cite{GL}, we have the following decomposition 
$$X^*=LRX^*\oplus L(I-R)X^*\oplus (I-L)X^*.$$
Now set 
$$D_1=LRD,\ D_2=L(I-R)D, \ D_3=(I-L)D.$$
It is easy to see that these are derivations into the corresponding summands of the above decomposition. 
One can show that $D_2$ and $D_3$ are inner, and 
\begin{align*}
D_1(a)&= LR(a\cdot n-m\cdot a) \\
&= \textrm{w}^*-\lim_\alpha \lim_\beta (e_\alpha  a\cdot n\cdot e_\beta - e_\alpha\cdot m\cdot a e_\beta ).
\end{align*}
But from the fact that $\varphi(a)(n-m)\cdot b=0$, it follows that
$$
e_\alpha a \cdot n \cdot e_\beta = \varphi(e_\alpha a) m\cdot e_\beta = e_\alpha a\cdot  m\cdot e_\beta.
$$
So $D_1(a)=a\cdot LR(m)-LR(m)\cdot a$, for each $a\in\mathcal{A} $. That is $D_1$ is inner and thus  $D$ is inner.
\end{proof}

The notion of essential $\varphi$-amenability provides another important variant that restricts attention to neo-unital modules, which often arise naturally in practice.

\begin{definition}\label{**}
Let \( \mathcal{A} \) be a Banach algebra and $ \varphi \in \Delta(\mathcal{A}) $. We say that \( \mathcal{A} \) is essentially \(\varphi\)-amenable (respectively, essentially \(\varphi\)-semi amenable) if for every neo-unital Banach \( \mathcal{A} \)-bimodule \( X \) with the right module action $ x\cdot a = \varphi(a)x  (a\in \mathcal{A}, x\in X)$, every continuous derivation \( D: \mathcal{A} \rightarrow X^* \) is inner (respectively, is semi-inner).
\end{definition}

The following result is now immediate from Definition \ref{**} and Proposition \ref{pro23}.

\begin{theorem}
Let $\mathcal{A}$ be a Banach algebra and \( \varphi \in \Delta(\mathcal{A}) \). Then the following statements hold:
\begin{enumerate}
\item $\mathcal{A}$ is essentially \(\varphi\)-amenable if and only if $\mathcal{A}$ is essentially \(\varphi\)-semi amenable.
\item If $\mathcal{A}$ has a bounded approximate identity, then $\mathcal{A}$ is \(\varphi\)-left amenable if and only if $\mathcal{A}$ is \(\varphi\)-semi amenable.
\end{enumerate}
\end{theorem}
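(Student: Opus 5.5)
Both parts follow by comparing definitions and then invoking Proposition \ref{pro23}. In each case one implication is trivial, since every inner derivation $\mathrm{ad}_x$ is semi-inner: take $x_1=x_2=x$ in $\mathrm{ad}_{x_1,x_2}$. So essential $\varphi$-amenability implies essential $\varphi$-semi amenability, and, reading $\varphi$-left amenability in the sense of the earlier definitions, namely that every derivation into the relevant dual module $X^*$ is inner, it implies $\varphi$-semi amenability. The real content is the converse in each part: a semi-inner derivation into $X^*$ must already be inner under the given hypotheses.

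For part (1), suppose $\mathcal{A}$ is essentially $\varphi$-semi amenable. Let $X$ be a neo-unital Banach $\mathcal{A}$-bimodule with $x\cdot a=\varphi(a)x$, and let $D:\mathcal{A}\to X^*$ be a continuous derivation. By hypothesis $D$ is semi-inner. The dual module $X^*$ then satisfies $a\cdot f=\varphi(a)f$, which is exactly the setting of Proposition \ref{pro23}. Case (\ref{p1:2}) of that proposition (neo-unital $X$) shows that $D$ is inner. Hence $\mathcal{A}$ is essentially $\varphi$-amenable.

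For part (2), assume $\mathcal{A}$ has a bounded approximate identity and is $\varphi$-semi amenable. Take a module $X$ of the relevant type and a derivation $D:\mathcal{A}\to X^*$. Then $D$ is semi-inner, and case (\ref{p1:3}) of Proposition \ref{pro23} (bounded approximate identity) shows that $D$ is inner. Hence $\mathcal{A}$ is $\varphi$-left amenable.

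The main obstacle is a bookkeeping one about module sides. In the definition of $\varphi$-semi amenability the condition is placed on the left action of $X$, whereas Proposition \ref{pro23} assumes $x\cdot a=\varphi(a)x$ on $X$, which becomes the left action $a\cdot f=\varphi(a)f$ on $X^*$. I would check carefully that the class of modules in the definition of $\varphi$-semi amenability, and in $\varphi$-left amenability, matches the hypotheses of Proposition \ref{pro23}. If they are mirrored, I would apply the proposition to the opposite algebra, or rerun its argument with left and right interchanged; the identity $\varphi(a)(n-m)\cdot b=0$ and the $LR$ decomposition argument transfer symmetrically. With that alignment settled, both equivalences reduce directly to Proposition \ref{pro23}.
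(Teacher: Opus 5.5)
Your proposal is correct and is essentially the paper's own argument: the paper simply declares the theorem immediate from the definitions and Proposition~\ref{pro23} (cases (2) and (3)), with the implication from inner to semi-inner being trivial. Your concern about module sides is well founded, and the paper glosses over it. In the definition of $\varphi$-semi amenability, $\varphi$ acts on the left of $X$, hence on the right of $X^*$, which is the opposite side to Proposition~\ref{pro23}; your passage to $\mathcal{A}^{\mathrm{op}}$ fixes this correctly. Even more directly, the derivation identity $a\cdot(n-m)\cdot b=0$ becomes $\varphi(b)\,a\cdot(n-m)=0$, so $a\cdot n=a\cdot m$ and $D=\mathrm{ad}_m$, and no bounded approximate identity is needed.
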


\section{Hereditary Properties and Tensor Products}

A fundamental aspect of any algebraic property is its behavior under standard constructions. We begin by examining how $\varphi$-semi contractibility behaves with respect to ideals and quotient algebras.

\begin{proposition}
Let $\mathcal{A}$ be a Banach algebra, $I$ a closed two-sided ideal of $\mathcal{A}$, and $\varphi \in \Delta(\mathcal{A})$ such that $I \subseteq \ker \varphi$. If $\mathcal{A}$ is $\varphi$-semi contractible, then $\mathcal{A}/I$ is $\widetilde{\varphi}$-semi contractible, where $\widetilde{\varphi}$ is the character induced by $\varphi$ on $\mathcal{A}/I$.
\end{proposition}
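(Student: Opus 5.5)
The plan is to use the algebraic characterization in Theorem \ref{22} and push the witnessing elements through the quotient map. First we record the setup. Let $q:\mathcal{A}\to\mathcal{A}/I$ be the canonical quotient map, a continuous surjective homomorphism. Since $I\subseteq\ker\varphi$, the formula $\widetilde{\varphi}(a+I)=\varphi(a)$ is well defined. It is linear and multiplicative, and it is nonzero because $\varphi$ is. It is also continuous, since $|\varphi(a)|=|\varphi(a+x)|\le\|\varphi\|\,\|a+x\|$ for all $x\in I$, which gives $\|\widetilde{\varphi}\|\le\|\varphi\|$. Hence $\widetilde{\varphi}\in\Delta(\mathcal{A}/I)$ and $\widetilde{\varphi}\circ q=\varphi$.

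Next, by the implication (\ref{t1:1})$\Rightarrow$(\ref{t1:2}) of Theorem \ref{22}, the $\varphi$-semi contractibility of $\mathcal{A}$ gives elements $m_1,m_2\in\mathcal{A}$ with $\varphi(m_1)=\varphi(m_2)=1$ and $am_1=\varphi(a)m_2$ for all $a\in\mathcal{A}$. Set $\widetilde{m}_i=q(m_i)=m_i+I$ for $i=1,2$. Then $\widetilde{\varphi}(\widetilde{m}_i)=\varphi(m_i)=1$. Every element of $\mathcal{A}/I$ has the form $q(a)$, and since $q$ is a homomorphism,
$$q(a)\,\widetilde{m}_1=q(am_1)=q(\varphi(a)m_2)=\widetilde{\varphi}(q(a))\,\widetilde{m}_2 .$$
Applying the implication (\ref{t1:2})$\Rightarrow$(\ref{t1:1}) of Theorem \ref{22} to $\mathcal{A}/I$ and $\widetilde{\varphi}$ then shows that $\mathcal{A}/I$ is $\widetilde{\varphi}$-semi contractible.

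No step here is a real obstacle. The only point needing care is that $\widetilde{\varphi}$ is a well-defined, continuous character, and this is exactly where the hypothesis $I\subseteq\ker\varphi$ is used. The same argument works for any continuous homomorphism $\theta:\mathcal{A}\to\mathcal{B}$ with dense range and any character $\psi$ on $\mathcal{B}$ with $\psi\circ\theta=\varphi$, provided density is used to extend the identity $b\,\theta(m_1)=\psi(b)\theta(m_2)$ from the range of $\theta$ to all of $\mathcal{B}$.
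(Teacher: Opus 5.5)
Your proof is correct, and it is exactly the argument the paper has in mind when it writes ``It is clear'': you push the witnesses $m_1,m_2$ from Theorem~\ref{22} through the quotient map $q$, using $\widetilde{\varphi}\circ q=\varphi$. This is also the special case $h=q$, $\psi=\widetilde{\varphi}$ of the paper's later proposition on continuous homomorphisms with dense range, which your final remark anticipates.
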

 
\begin{proof}
It is clear.
\end{proof}

The converse direction requires additional hypotheses:

\begin{proposition}
Let $\mathcal{A}$ be a Banach algebra, $I$ a closed two-sided ideal of $\mathcal{A}$, and $\varphi \in \Delta(\mathcal{A})$ with $I \subseteq \ker \varphi$. If $I$ has a right identity and $\mathcal{A}/I$ is $\widetilde{\varphi}$-semi contractible, then $\mathcal{A}$ is $\varphi$-semi contractible.
\end{proposition}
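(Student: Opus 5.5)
The plan is to use the algebraic characterization of Theorem \ref{22} in both directions. We apply it once to $\mathcal{A}/I$ to obtain witnesses there, lift these witnesses to $\mathcal{A}$, and then use the right identity of $I$ to remove the error term, which lies in $I$.

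First, since $\mathcal{A}/I$ is $\widetilde{\varphi}$-semi contractible, Theorem \ref{22} gives elements $m_1', m_2' \in \mathcal{A}/I$ with $\widetilde{\varphi}(m_1')=\widetilde{\varphi}(m_2')=1$ and $(a+I)m_1'=\widetilde{\varphi}(a+I)m_2'$ for all $a\in\mathcal{A}$. Choose representatives $u_1,u_2\in\mathcal{A}$ with $m_i'=u_i+I$. Since $\widetilde{\varphi}(a+I)=\varphi(a)$, we have $\varphi(u_1)=\varphi(u_2)=1$. Moreover, for every $a\in\mathcal{A}$ the element
$$y_a := a u_1-\varphi(a)u_2$$
belongs to $I$.

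Next, let $e\in I$ be a right identity for $I$, so that $xe=x$ for all $x\in I$. Define
$$m_1=u_1-u_1e,\qquad m_2=u_2-u_2e.$$
Since $e\in I\subseteq\ker\varphi$, we get $\varphi(u_ie)=\varphi(u_i)\varphi(e)=0$, and hence $\varphi(m_1)=\varphi(m_2)=1$. For every $a\in\mathcal{A}$ we then compute
$$am_1-\varphi(a)m_2=(au_1-\varphi(a)u_2)-(au_1-\varphi(a)u_2)e=y_a-y_ae=0,$$
where the last equality holds because $y_a\in I$ and $e$ is a right identity for $I$. Thus $am_1=\varphi(a)m_2$ for all $a\in\mathcal{A}$. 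By Theorem \ref{22}, $\mathcal{A}$ is $\varphi$-semi contractible.

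The only step that is not routine is the choice of the correction $u_i\mapsto u_i-u_ie$. It has to kill the error $y_a$, which lies in $I$, uniformly in $a$. Right multiplication by $e$ achieves this because it is the identity on $I$. At the same time, the correction does not change the value of $\varphi$, because $e\in\ker\varphi$.
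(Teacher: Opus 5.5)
Your proposal is correct and is essentially the paper's own proof: both lift the witnesses from $\mathcal{A}/I$, replace them by $u_i-u_ie$ with $e$ a right identity for $I$, and use that $au_1-\varphi(a)u_2\in I$ is fixed by right multiplication by $e$. You also spell out why $\varphi(u_ie)=0$, which the paper leaves implicit.
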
 

\begin{proof}
By assumption there are $n_1+I$ and $n_2+I$ in ${\mathcal{A}}/{I}$ such that
 $\widetilde{\varphi}(n_1+I)=\widetilde{\varphi}(n_2+I)=1$, and for any $a\in\mathcal{A}$,  
 $(a+I)(n_1+I)=\widetilde{\varphi}(a+I)(n_2+I)$.
 
Define $m_1=n_1-n_1e$ and $m_2=n_2-n_2e$, where $e$ is a right identity for $I$. From the fact that 
$\varphi(n_1)=\varphi(n_2)=1$ and $n_1e,n_2e\in I$, it follows that $\varphi(m_1)=\varphi(m_2)=1$ and 
for every $a\in \mathcal{A}$, we have 
\begin{align*}
am_1-\varphi(a) m_2 &= an_1-an_1 e - \varphi(a) n_2+\varphi(a) n_2 e\\
&= an_1-\varphi(a)n_2-(an_1-\varphi(a) n_2)e=0
\end{align*}
since $an_1 - \varphi(a)n_2\in I$. It follows that $am_1=\varphi(a)m_2$ and so $\mathcal{A}$ is $\varphi$-semi contractible.
\end{proof}

When the ideal does not lie entirely in the kernel of $\varphi$, we have a different inheritance pattern:

\begin{proposition} 
Let $\mathcal{A}$ be a Banach algebra, $I$  a closed two-sided ideal of $\mathcal{A}$, and $\varphi\in\Delta(\mathcal{A})$ with $\varphi_{|_I}\neq 0$.
Then $\mathcal{A}$ is $\varphi$-semi contractible if and only if $I$ is $\varphi_{|_I}$-semi contractible.
\end{proposition}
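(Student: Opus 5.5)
Both directions go through the algebraic characterization in Theorem \ref{22}. Write $\psi=\varphi_{|_I}$, which is a character on $I$ because $\psi\neq 0$, and fix $i_0\in I$ with $\varphi(i_0)=1$. Such an element exists since $\varphi_{|_I}\neq 0$ and we can rescale.

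($\Leftarrow$) Assume $I$ is $\psi$-semi contractible. By Theorem \ref{22} there are $n_1,n_2\in I$ with $\varphi(n_1)=\varphi(n_2)=1$ and $b n_1=\varphi(b)n_2$ for all $b\in I$. We need the same identity for every $a\in\mathcal{A}$. The natural move is to multiply through by $i_0$. Put $m_1=i_0 n_1$ and $m_2=i_0 n_2$; both lie in $I$, and $\varphi(m_1)=\varphi(m_2)=1$. For $a\in\mathcal{A}$ we have $ai_0\in I$, so
\[
am_1=(ai_0)n_1=\varphi(ai_0)n_2=\varphi(a)\,n_2 .
\]
This gives $am_1=\varphi(a)n_2$, which is almost what we want. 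So we instead take the pair $(m_1,n_2)$, i.e.\ $m_1=i_0n_1$ and $m_2:=n_2$. Both satisfy $\varphi=1$, and $am_1=\varphi(a)m_2$ holds for all $a\in\mathcal{A}$. Hence $\mathcal{A}$ is $\varphi$-semi contractible by Theorem \ref{22}.

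($\Rightarrow$) Assume $\mathcal{A}$ is $\varphi$-semi contractible, with $m_1,m_2\in\mathcal{A}$ as in Theorem \ref{22}. The obstacle is that $m_1,m_2$ need not lie in $I$. To fix this, multiply on the left by $i_0$. Set $n_1=i_0m_1$, which lies in $I$ because $I$ is an ideal, and set $n_2=i_0m_1$ as well. For $a\in\mathcal{A}$ we have
\[
a n_1=(ai_0)m_1=\varphi(ai_0)m_2=\varphi(a)m_2 .
\]
Taking $a=i_0$ gives $i_0m_1=m_2$. Hence $m_2\in I$, and $n_1=m_2$ with $\varphi(n_1)=1$.

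So $n:=m_2\in I$ satisfies $\varphi(n)=1$ and $bn=b\,i_0m_1=\varphi(bi_0)m_2=\varphi(b)n$ for all $b\in I$. The pair $(n,n)$ therefore satisfies the conditions of Theorem \ref{22} for $I$ and $\psi$, and $I$ is $\psi$-semi contractible.

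The main point to check carefully is the $(\Rightarrow)$ direction: one must make sure the witnesses land in $I$. The trick is that the identity $am_1=\varphi(a)m_2$, evaluated at $a=i_0\in I$, forces $m_2=i_0m_1\in I$. In the $(\Leftarrow)$ direction the only subtlety is choosing the right pair of witnesses, $(i_0n_1, n_2)$. Both directions rely only on $\varphi(i_0)=1$ and the ideal property.
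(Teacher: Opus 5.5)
Your proof is correct and follows essentially the same route as the paper: both directions go through Theorem~\ref{22} and left-multiply the given witnesses by a fixed $i_0\in I$ with $\varphi(i_0)=1$. In fact your witnesses $(i_0n_1,n_2)$ and $(m_2,m_2)$ coincide with the paper's $(b_0m_1',b_0m_2')$ and $(b_0m_1,b_0m_2)$.

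Your forward-direction check $bm_2=(bi_0)m_1=\varphi(b)m_2$ is tidier than the paper's chain, which tacitly uses $cm_2=\varphi(c)m_2$. One small fix: $i_0m_1=m_2$ comes from putting $a=i_0$ in the original identity $am_1=\varphi(a)m_2$, not in your displayed identity for $an_1$.
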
 

\begin{proof}
Suppose that $\mathcal{A}$ is $\varphi$-semi contractible. Then there are $m_1$ and $m_2$ in $\mathcal{A}$ such that $\varphi(m_1)=\varphi(m_2)=1$ and $am_1=\varphi(a)m_2$, for each $a\in A$. 
Choose $b_0\in I$ such that $\varphi(b_0)=1$ and define $m_1':= b_0m_1$ and $m_2':=b_0m_2$. 
Then $\varphi_{|_I}(m_1')=\varphi_{|_I}(m_2')=1$ and for every $b\in I$ we have
$$b m_1'=bb_0m_1=\varphi(bb_0)m_2=\varphi(b_0b)m_2=b_0bm_2=\varphi(b)b_0m_2=\varphi(b)m_2',$$
and therefore $I$ is $\varphi_{|_I}$-semi contractible.

For the converse, suppose that $I$ is $\varphi_{|_I}$-semi contractible. Then there are $m_1',m_2'\in I$ such that $\varphi_{|_I}(m_1')=\varphi_{|_I}(m_2')=1$ and $bm_1'=\varphi(b)m_2'$ for each $b\in I$. 
Choose $b_0\in I$  such that $\varphi(b_0)=1$ and define $m_1=b_0m_1'$ and $m_2=b_0m_2'$.
Thus $\varphi(m_1)=\varphi(m_2)=1$ and for every $a\in \mathcal{A}$ we have
\begin{align*}
am_1=ab_0m_1' &=\varphi(ab_0)m_2' =\varphi(a)\varphi(b_0b_0)m_2' \\
& =\varphi(a) (b_0b_0m_1')=\varphi(a) (b_0\varphi(b_0) m_2')=\varphi(a) (b_0m_2')=\varphi(a)m_2,
\end{align*}
so $\mathcal{A}$ is $\varphi$-semi contractible.
\end{proof}

The next result shows that $\varphi$-semi contractibility is preserved under continuous homomorphisms with dense range:

\begin{proposition}
Let $\mathcal{A}$ and $\mathcal{B}$ be Banach algebras, $\psi\in\Delta(\mathcal{B})$, and $h:\mathcal{A}\to \mathcal{B}$ be a continuous homomorphism with dense range. If $\mathcal{A}$ is $\psi\circ h$-semi contractible, then $\mathcal{B}$ is $\psi$-semi contractible.
\end{proposition}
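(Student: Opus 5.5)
We use the characterization in Theorem \ref{22} and push the witnessing elements forward along $h$; density of the range is what lets the defining identity pass from $h(\mathcal{A})$ to all of $\mathcal{B}$.

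First note that $\varphi:=\psi\circ h$ is a character on $\mathcal{A}$. It is clearly multiplicative and continuous. It is nonzero because $\psi\neq 0$ is continuous and $h(\mathcal{A})$ is dense in $\mathcal{B}$: if $\psi$ vanished on $h(\mathcal{A})$, it would vanish on $\mathcal{B}$.

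By Theorem \ref{22}, there exist $m_1,m_2\in\mathcal{A}$ with $\psi(h(m_1))=\psi(h(m_2))=1$ and $am_1=\psi(h(a))\,m_2$ for all $a\in\mathcal{A}$. Set $n_1=h(m_1)$ and $n_2=h(m_2)$ in $\mathcal{B}$. Then $\psi(n_1)=\psi(n_2)=1$. Applying $h$ to the identity gives
$$h(a)\,n_1=\psi(h(a))\,n_2 \qquad (a\in\mathcal{A}).$$
So the required relation $bn_1=\psi(b)n_2$ holds for every $b$ in the range of $h$.

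The only real step is extending this to all $b\in\mathcal{B}$. Consider the map
$$T:\mathcal{B}\to\mathcal{B},\qquad T(b)=bn_1-\psi(b)n_2.$$
It is continuous, because multiplication in $\mathcal{B}$ is jointly continuous and $\psi$ is bounded. Since $T$ vanishes on the dense subspace $h(\mathcal{A})$, it vanishes on all of $\mathcal{B}$. Hence $bn_1=\psi(b)n_2$ for every $b\in\mathcal{B}$, and Theorem \ref{22} shows that $\mathcal{B}$ is $\psi$-semi contractible.
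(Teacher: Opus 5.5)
Your proposal is correct and follows essentially the same approach as the paper. You push the witnesses $m_1,m_2$ from Theorem \ref{22} forward along $h$ and extend $bn_1=\psi(b)n_2$ from $h(\mathcal{A})$ to $\mathcal{B}$ by continuity, which is what the paper's net argument does. Your check that $\psi\circ h$ is nonzero is a small extra the paper leaves implicit.
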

 
\begin{proof}
By assumption, there are $m_1,m_2\in \mathcal{A}$ such that $\psi\circ h(m_1)=\psi\circ h(m_2)$ and for any $a\in\mathcal{A}$,  $am_1=\psi\circ h(a) m_2$. 
Set $m_1^{\mathcal{B}}=h(m_1)$ and $m_2^{\mathcal{B}}=h(m_2)$. 
Clearly $\psi(m_1^{\mathcal{B}})=\psi(m_2^{\mathcal{B}})=1$. 
Now let $b\in \mathcal{B}$ and  $(a_\alpha)_\alpha$ be a net in $\mathcal{A}$ such that $h(a_\alpha)\to b$. Then
\begin{align*}
b m_1^{\mathcal{B}}=bh(m_1)=\lim_\alpha h(a_\alpha)h(m_1) &= \lim_\alpha h(a_\alpha m_1)=\lim_\alpha h(\psi\circ h(a_\alpha)m_2) \\
&= \lim_\alpha \psi\circ h(a_\alpha)h(m_2) \\
&= \psi(b)m_2^{\mathcal{B}},
\end{align*}
therefore $\mathcal{B}$ is $\psi$-semi contractible.
\end{proof}

The following result demonstrates that essential $\varphi$-semi amenability can be \emph{lifted} from a sufficiently large subalgebra to the whole algebra, providing an important property that flows from substructures to larger structures.

\begin{proposition}
Let $\mathcal{A}$ be a Banach algebra and $\varphi \in \Delta(\mathcal{A})$. Let $\mathcal{B}$ be a subalgebra of $\mathcal{A}$ containing $\mathcal{A}^2$. 
If $\mathcal{B}$ is essentially $\varphi_{|_\mathcal{B}}$-semi amenable, then $\mathcal{A}$ is essentially $\varphi$-semi amenable.
\end{proposition}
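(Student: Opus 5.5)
The plan is a restriction-and-extension argument modeled on the classical fact that amenability passes from a subalgebra containing $\mathcal{A}^2$ to $\mathcal{A}$. Let $X$ be a neo-unital Banach $\mathcal{A}$-bimodule with $x\cdot a=\varphi(a)x$, and let $D:\mathcal{A}\to X^*$ be a continuous derivation. I first want to regard $X$ as a $\mathcal{B}$-bimodule by restricting the actions. The right action is then $x\cdot b=\varphi_{|_\mathcal{B}}(b)x$, which is of the required form. The key point to verify is that $X$ is still neo-unital over $\mathcal{B}$. Since $X=\mathcal{A}\cdot X\cdot\mathcal{A}$, I can apply neo-unitality twice. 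Given $x=a_1\cdot x'\cdot a_2$, I write $x'=a_3\cdot x''\cdot a_4$. Then
\[
x=(a_1a_3)\cdot x''\cdot(a_4a_2),
\]
and $a_1a_3,\,a_4a_2\in\mathcal{A}^2\subseteq\mathcal{B}$. Hence $X=\mathcal{B}\cdot X\cdot\mathcal{B}$.

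Next, $D_{|_\mathcal{B}}:\mathcal{B}\to X^*$ is a continuous derivation. By the hypothesis on $\mathcal{B}$ it is semi-inner, so there are $m,n\in X^*$ with $D(b)=b\cdot n-m\cdot b$ for all $b\in\mathcal{B}$. I then apply Proposition~\ref{pro23}(\ref{p1:2}) to $\mathcal{B}$ and the neo-unital module $X$, which gives $n=m$. Alternatively one can argue directly: the identity $\varphi(b)(n-m)\cdot b'=0$, paired against $X=\mathcal{B}\cdot X\cdot\mathcal{B}$, forces $n=m$. So $D_{|_\mathcal{B}}=\mathrm{ad}_n$ is inner on $\mathcal{B}$.

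The remaining step, which I expect to be the main obstacle, is extending the formula from $\mathcal{B}$ to all of $\mathcal{A}$. Set $E=D-\mathrm{ad}_n$, a derivation on $\mathcal{A}$ vanishing on $\mathcal{B}\supseteq\mathcal{A}^2$. For $a\in\mathcal{A}$ and $b,c\in\mathcal{A}$ we have $E(ab)=0$ and $E(ab)=a\cdot E(b)+E(a)\cdot b$. Now take $x=c\cdot x'\cdot d$ and pair, using $E(a)\in X^*$. Then
\[
\langle E(a)\cdot b,\; c\cdot x'\cdot d\rangle=\langle E(a),\, bc\cdot x'\cdot d\rangle.
\]
I would rather exploit $E((ab)c)$ and $E(a(bc))$ directly. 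Since $bc\in\mathcal{B}$, we get $0=E(a\,bc)=a\cdot E(bc)+E(a)\cdot bc=E(a)\cdot bc$. So $E(a)$ annihilates $bc\cdot X$, i.e.\ $\mathcal{A}^2\cdot X$. Since $X=\mathcal{A}\cdot X\cdot\mathcal{A}=\mathcal{A}^2\cdot X\cdot\mathcal{A}$ after applying neo-unitality twice, and this lies inside $\mathcal{A}^2\cdot X$, we conclude $E(a)=0$.

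Therefore $D=\mathrm{ad}_n=\mathrm{ad}_{n,n}$ is (semi-)inner on $\mathcal{A}$, and $\mathcal{A}$ is essentially $\varphi$-semi amenable. The care needed is only in checking the duality conventions, namely that $(f\cdot a)(x)=f(a\cdot x)$, so $E(a)\cdot bc$ evaluated at $x$ is $E(a)(bc\cdot x)$.
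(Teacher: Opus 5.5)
Your proof is correct and follows essentially the same route as the paper. You restrict $D$ to $\mathcal{B}$, check that $X$ is neo-unital over $\mathcal{B}$ via $\mathcal{A}^2\subseteq\mathcal{B}$, take the semi-inner representation on $\mathcal{B}$, and kill the difference derivation on $\mathcal{A}$ using $\mathcal{A}^2\subseteq\mathcal{B}$ and neo-unitality. The one variation is that you first collapse the pair to a single element via Proposition~\ref{pro23}(\ref{p1:2}), so $\mathrm{ad}_n$ is automatically a derivation on $\mathcal{A}$, whereas the paper keeps $x_1^*,x_2^*$ and verifies $a_1\cdot(x_2^*-x_1^*)\cdot a_2=0$ directly so that $\mathrm{ad}_{x_1^*,x_2^*}$ is a derivation on all of $\mathcal{A}$.
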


\begin{proof}
Suppose that $\mathcal{B}$ is essentially $\varphi_{|_\mathcal{B}}$-semi amenable.  
Let $D: \mathcal{A} \to X^*$ be a derivation for some neo-unital $\mathcal{A}$--bimodule $X$ with the right module action 
$x \cdot a = \varphi(a)x$ for all $a \in \mathcal{A}$ and  $x \in X$.

Moreover, $X = \mathcal{A} \cdot X = \mathcal{A} \cdot (A \cdot X) \subseteq \mathcal{B} \cdot X \subseteq X$, and so $\mathcal{B} \cdot X = X$. 
Similarly, $X \cdot \mathcal{B} \subseteq X$. Thus, $X$ is a neo-unital Banach $\mathcal{B}$--bimodule.

Since $D_{|_\mathcal{B}} : \mathcal{B} \to X^*$ is a derivation, there exist $x_1^*, x_2^* \in X^*$ such that
$$ D_{|_\mathcal{B}} = \operatorname{ad}_{x_1^*, x_2^*}.$$
So, for every $b_1, b_2 \in \mathcal{B}$, we have
$$ \varphi(b_1)(x_1^* - x_2^*) \cdot b_2 = 0. $$
Let $x$ be an arbitrary element of $X$. By definition, there exist $b_1, b_2 \in \mathcal{B}$ and $x' \in X$ such that
$$ x = b_1 \cdot x' \cdot b_2. $$
Hence, for every $a_1, a_2 \in \mathcal{A}$, we have
\begin{align*}
\langle a_1, (x_2^* - x_1^*) \cdot a_2, x \rangle  
& = \langle a_1, (x_2^* - x_1^*) \cdot a_2, b_1 \cdot x' \cdot b_2 \rangle \\
& = \langle b_2 a_1 \cdot  (x_2^* - x_1^*) \cdot a_2 b_1, x' \rangle \\
& = \langle \varphi(b_2 a_1)  (x_2^* - x_1^*) \cdot a_2 b_1, x' \rangle  \\
& = 0.
\end{align*}
This means that 
$$ a_1 \cdot (x_2^* - x_1^*) \cdot a_2 = 0 \quad \text{for all } a_1, a_2 \in \mathcal{A}. $$
Define 
$$ \overline{D} = D - \operatorname{ad}_{x_1^*, x_2^*}. $$
From the fact that $a_1 \cdot (x_2^* - x_1^*) \cdot a_2 = 0$ for all $a_1, a_2 \in \mathcal{A}$, 
we can show that $\overline{D}$ is a derivation from $A$ into $X$ and $\overline{D}_{|_\mathcal{B}} = 0$.

Let $a \in \mathcal{A}$ and $x \in X$. Since $X$ is neo-unital, there exist $x_1 \in X$ and $b_1 \in \mathcal{B}$ such that 
$x = b_1 \cdot x_1$. Consequently,
\begin{align*}
\langle \overline{D}(a), x \rangle 
= \langle \overline{D}(a), b_1 \cdot x_1 \rangle 
= \langle \overline{D}(a) \cdot b_1, x_1 \rangle 
= \langle \overline{D}(a b_1) - a \cdot \overline{D}(b_1), x_1 \rangle = 0.
\end{align*}
So $\overline{D} = 0$, and hence $D = \operatorname{ad}_{x_1^*, x_2^*}$.  

Therefore, $\mathcal{A}$ is essentially $\varphi$-semi amenable.
\end{proof}

The behavior of $\varphi$-semi contractibility under projective tensor products is particularly important, as it reveals how the property interacts with one of the fundamental constructions in Banach algebra theory.

\begin{theorem}
Let $\mathcal{A}$ and $\mathcal{B}$ be Banach algebras with $\varphi\in\Delta(\mathcal{A})$ and $\psi\in\Delta(\mathcal{B})$. Then $\mathcal{A}\widehat{\otimes} \mathcal{B}$ is 
$\varphi\otimes\psi$-semi contractible if and only if $\mathcal{A}$ is $\varphi$-semi contractible  and $\mathcal{B}$ is $\psi$-semi contractible.
\end{theorem}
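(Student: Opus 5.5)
Both directions will go through the algebraic characterization in Theorem \ref{22}. Throughout we use that $\varphi\otimes\psi$ is a character on $\mathcal{A}\widehat{\otimes}\mathcal{B}$ determined by $(\varphi\otimes\psi)(a\otimes b)=\varphi(a)\psi(b)$.

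\emph{Sufficiency.} Let $m_1,m_2\in\mathcal{A}$ satisfy $\varphi(m_i)=1$ and $am_1=\varphi(a)m_2$. Let $n_1,n_2\in\mathcal{B}$ satisfy $\psi(n_i)=1$ and $bn_1=\psi(b)n_2$. Set $M_1=m_1\otimes n_1$ and $M_2=m_2\otimes n_2$. Then $(\varphi\otimes\psi)(M_i)=1$. On elementary tensors,
\[
(a\otimes b)M_1=am_1\otimes bn_1=\varphi(a)\psi(b)\,m_2\otimes n_2=(\varphi\otimes\psi)(a\otimes b)M_2 .
\]
Both sides are bounded and linear in the first factor, and elementary tensors span a dense subspace. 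By continuity, $wM_1=(\varphi\otimes\psi)(w)M_2$ for all $w\in\mathcal{A}\widehat{\otimes}\mathcal{B}$. Theorem \ref{22} then gives the claim.

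\emph{Necessity.} Suppose $M_1,M_2\in\mathcal{A}\widehat{\otimes}\mathcal{B}$ satisfy $(\varphi\otimes\psi)(M_i)=1$ and $wM_1=(\varphi\otimes\psi)(w)M_2$. We push these down to $\mathcal{A}$ with the bounded slice map $T=\mathrm{id}\otimes\psi:\mathcal{A}\widehat{\otimes}\mathcal{B}\to\mathcal{A}$, defined by $a\otimes b\mapsto\psi(b)a$. This map is a contractive algebra homomorphism, and $\varphi\circ T=\varphi\otimes\psi$; both facts are checked on elementary tensors and extended by continuity.

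Fix $b_0\in\mathcal{B}$ with $\psi(b_0)=1$, and put $m_i=T(M_i)$, so that $\varphi(m_i)=1$. For $a\in\mathcal{A}$, apply $T$ to the identity with $w=a\otimes b_0$. Since $T$ is multiplicative,
\[
T(a\otimes b_0)\,T(M_1)=(\varphi\otimes\psi)(a\otimes b_0)\,T(M_2),
\]
that is, $am_1=\varphi(a)m_2$. So $\mathcal{A}$ is $\varphi$-semi contractible by Theorem \ref{22}. The same argument with $\varphi\otimes\mathrm{id}$ shows that $\mathcal{B}$ is $\psi$-semi contractible.

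Alternatively, $T$ is a continuous homomorphism with dense range; indeed it is onto, since $a=T(a\otimes b_0)$. One can therefore cite the dense-range proposition directly, once $\varphi=\varphi\otimes\psi$ composed appropriately is noted, i.e. once $\varphi\circ T=\varphi\otimes\psi$ is established.

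\emph{Main obstacle.} The only point needing care is that the slice maps $\mathrm{id}\otimes\psi$ and $\varphi\otimes\mathrm{id}$ are well-defined bounded homomorphisms on the projective tensor product. This follows from the universal property of $\widehat{\otimes}$ applied to the bounded bilinear map $(a,b)\mapsto\psi(b)a$, which gives $\|T\|\le\|\psi\|\le1$. Multiplicativity holds on elementary tensors and extends by linearity and density. Everything else is the routine computation above.
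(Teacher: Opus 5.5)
Your proof is correct and takes essentially the same route as the paper. For sufficiency you tensor the witnesses into $m_1\otimes n_1$ and $m_2\otimes n_2$, and for necessity you push the witnesses down along the slice map $\Upsilon=\mathrm{id}\otimes\psi$, using $\varphi\circ\Upsilon=\varphi\otimes\psi$. Your write-up is slightly tidier: you extend the identity from elementary tensors to all of $\mathcal{A}\widehat{\otimes}\mathcal{B}$ by continuity, and you get $am_1=\varphi(a)m_2$ directly from the multiplicativity of $T$ applied to $(a\otimes b_0)M_1$, avoiding the paper's detour through $(a_0\otimes b_0)m_1$.
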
 

\begin{proof}
Suppose that $\mathcal{A}\widehat{\otimes} \mathcal{B}$ is $\varphi\otimes\psi$-semi contractible. 
Then there exist $m_1,m_2\in \mathcal{A}\widehat{\otimes} \mathcal{B}$  with $\varphi\otimes\psi(m_1)=\varphi\otimes\psi(m_2)=1$ and 
$a\otimes b\cdot m_1=\varphi\otimes\psi(a\otimes b)m_2$, for all $a\in \mathcal{A}$ and  $b\in \mathcal{B}$. 

Define the linear map $\Upsilon : \mathcal{A}\widehat{\otimes} \mathcal{B}\to \mathcal{A}$ 
by $\Upsilon(a\otimes b) = a\psi(b)$ as in the proof of  Theorem 3-14 in \cite{N}. 
Let $m^{\mathcal{A}}_1=\Upsilon(m_1)$ and $m^{\mathcal{A}}_2=\Upsilon(m_2)$. 
Since $\varphi\circ\Upsilon = \varphi\otimes\psi$, one can easily see that $\varphi(m^{\mathcal{A}}_1)=\varphi(m^{\mathcal{A}}_2)=1$. Choose $a_0\in \mathcal{A}$ and $b_0\in \mathcal{B}$ such that $\varphi(a_0)=1$ and $\psi(b_0)=1$. 
Thus For any $a\in \mathcal{A}$
\begin{align*}
a m^{\mathcal{A}}_1&=a\Upsilon(m_1)\\
&=a\Upsilon((a_0\otimes b_0)m_1)
=\Upsilon(a a_0\otimes b_0\cdot m_1)=\Upsilon(\varphi(a)m_2) = 
\varphi(a)m^{\mathcal{A}}_2.
\end{align*}
Therefore $\mathcal{A}$ is $\varphi$-semi contractible. The same conclusion can be drawn for $\psi$-semi contractibility of $\mathcal{B}$.

Conversely, assume that $\mathcal{A}$ is $\varphi$-semi contractible and $\mathcal{B}$ is $\psi$-semi contractible. 
Then there are $m^{\mathcal{A}}_1,m^{\mathcal{A}}_2\in \mathcal{A}$ and $m^{\mathcal{B}}_1,m^{\mathcal{B}}_2\in \mathcal{B}$ such that
\begin{align*}
\varphi(m_1^\mathcal{A}) &= \varphi(m_2^\mathcal{A}) = 1, \quad a m_1^\mathcal{A} = \varphi(a) m_2^\mathcal{A} \quad (a \in \mathcal{A}), \\
\psi(m_1^\mathcal{B}) &= \psi(m_2^\mathcal{B}) = 1, \quad b m_1^\mathcal{B} = \psi(b) m_2^\mathcal{B} \quad (b \in \mathcal{B}).
\end{align*}
Define $m_1=m^{\mathcal{A}}_1\otimes m^{\mathcal{B}}_1$ and 
$m_2=m^{\mathcal{A}}_2\otimes m^{\mathcal{B}}_2$. 
So for every $a\in \mathcal{A}$ and $b\in \mathcal{B}$ we have
\begin{align*}
(a\otimes b)m_1 &= a\otimes b(m^{\mathcal{A}}_1\otimes m^{\mathcal{B}}_1)= a m^{\mathcal{A}}_1\otimes b m^{\mathcal{B}}_1\\
&=(\varphi(a),\psi(b))(m^{\mathcal{A}}_2\otimes m^{\mathcal{B}}_2)=\varphi\otimes \psi (a\times b)m_2.
\end{align*}
Also $\varphi\otimes\psi(m_1)=\varphi\otimes\psi(m_2)=1$. 
Therefore $\mathcal{A}\widehat{\otimes} \mathcal{B}$ is $\varphi\otimes \psi$-semi contractible.
\end{proof}

Let $\mathcal{A}$ and $\mathcal{B}$ be Banach algebras, it is well known that $\mathcal{A}\oplus_1 \mathcal{B}$, the $l^1$-direct sum of $\mathcal{A}$ and $\mathcal{B}$, is a Banach algebra with respect to the canonical multiplication defined by $ (a,b)(c,d):= (ac,bd) (a,c\in \mathcal{A}, b,d\in \mathcal{B})$ and note that  
$$\Delta(\mathcal{A}\oplus_1 \mathcal{B})=\{(\varphi,0)|\varphi\in \Delta(\mathcal{A})\}\cup\{(0,\phi)|\phi\in \Delta(\mathcal{B})\} $$

For brevity and clarity, we omit the proof of the following proposition.
\begin{proposition}
Let $\mathcal{A}$ and $\mathcal{B}$ be Banach algebras, $ \varphi \in \Delta(\mathcal{A})$ and $ \phi\in \Delta(\mathcal{B})$. Then the following statements hold:
\begin{enumerate}
\item \(\mathcal{A}\oplus_1 \mathcal{B} \) is \((\varphi,0)\)-semi contractible if and only if \( \mathcal{A} \) is \(\varphi\)-semi contractible.
\item \(\mathcal{A}\oplus_1 \mathcal{B} \) is \((0,\phi)\)-semi contractible if and only if \( \mathcal{B} \) is \(\phi\)-semi contractible.
\end{enumerate}
\end{proposition}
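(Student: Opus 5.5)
We will work entirely through the algebraic characterization in Theorem \ref{22}, so each direction amounts to transporting a pair $(m_1,m_2)$ between $\mathcal{A}$ and $\mathcal{A}\oplus_1\mathcal{B}$. We prove only part (1); part (2) follows by the symmetric argument with the roles of the two coordinates interchanged.

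Suppose first that $\mathcal{A}$ is $\varphi$-semi contractible. By Theorem \ref{22} there are $m_1,m_2\in\mathcal{A}$ with $\varphi(m_1)=\varphi(m_2)=1$ and $am_1=\varphi(a)m_2$ for all $a\in\mathcal{A}$. We set $M_1=(m_1,0)$ and $M_2=(m_2,0)$ in $\mathcal{A}\oplus_1\mathcal{B}$. Then $(\varphi,0)(M_i)=\varphi(m_i)=1$. Moreover, for any $(a,b)$,
\[
(a,b)M_1=(am_1,\,b\cdot 0)=(\varphi(a)m_2,0)=(\varphi,0)(a,b)\,M_2.
\]
Hence, by Theorem \ref{22}, $\mathcal{A}\oplus_1\mathcal{B}$ is $(\varphi,0)$-semi contractible.

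Conversely, assume $\mathcal{A}\oplus_1\mathcal{B}$ is $(\varphi,0)$-semi contractible. Theorem \ref{22} then gives $M_i=(m_i,n_i)$ with $\varphi(m_i)=1$ and $(a,b)M_1=\varphi(a)M_2$ for all $(a,b)$. Taking $b=0$ and reading off the first coordinate yields $am_1=\varphi(a)m_2$ for every $a\in\mathcal{A}$. Together with $\varphi(m_1)=\varphi(m_2)=1$, this shows, via Theorem \ref{22}, that $\mathcal{A}$ is $\varphi$-semi contractible.

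Alternatively, the converse follows from the earlier proposition on continuous homomorphisms with dense range. The coordinate projection $P:\mathcal{A}\oplus_1\mathcal{B}\to\mathcal{A}$ is a continuous surjective homomorphism with $\varphi\circ P=(\varphi,0)$, so that proposition applies directly. The forward direction could likewise be obtained from the ideal proposition, since $\mathcal{A}\oplus 0$ is a closed ideal on which $(\varphi,0)$ does not vanish, but the direct computation above is simpler.

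None of these steps is a genuine obstacle. The only point requiring care is the bookkeeping of the second coordinate: in the forward direction the $\mathcal{B}$-coordinate of $(a,b)M_1$ vanishes, and it must match the $\mathcal{B}$-coordinate of $\varphi(a)M_2$. This is why we choose zero $\mathcal{B}$-components in $M_1$ and $M_2$ rather than arbitrary ones.
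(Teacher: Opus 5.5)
The paper gives no proof of this proposition (the authors say they omit it), so there is no argument of theirs to compare against. Your proof is correct and complete.

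It is the direct verification through Theorem~\ref{22} that the paper itself uses for the tensor product theorem. In the forward direction, $(m_1,0)$ and $(m_2,0)$ work. For the converse, reading off first coordinates suffices; alternatively, you can apply the dense-range homomorphism proposition to the coordinate projection $P$, since $\varphi\circ P=(\varphi,0)$.

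One small remark: your appeal to the ideal proposition gives more than the forward direction. The ideal $\mathcal{A}\oplus 0$ is closed and isometrically isomorphic to $\mathcal{A}$, and $(\varphi,0)$ restricts on it to $\varphi$. So that proposition yields both implications at once.
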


\section{Approximate $\varphi$-Semi Contractibility}

The investigation of approximate versions of algebraic properties has been a fruitful direction in Banach algebra theory, as evidenced by the study of approximate amenability and related concepts. In the context of $\varphi$-semi contractibility, it is natural to consider what happens when the exact algebraic conditions of Theorem \ref{22} are relaxed to hold only in an approximate sense. This leads us to the following definition of approximate $\varphi$-semi contractibility, which provides a quantitative measure of how closely an algebra approximates the $\varphi$-semi contractible condition.

\begin{definition}
A Banach algebra $\mathcal{A}$ is called approximately $\varphi$-semi contractible if for every Banach $\mathcal{A}$-bimodule $X$ with right action $x \cdot a = \varphi(a)x$, every continuous derivation $D: \mathcal{A} \to X$ is approximately semi-inner, i.e., there exist nets $(m_\alpha)$ and $(n_\alpha)$ in $X$ such that:
\[
D(a) = \lim_\alpha (a \cdot m_\alpha - n_\alpha \cdot a) = \lim_\alpha (a \cdot m_\alpha - \varphi(a) n_\alpha) \quad \text{for all } a \in \mathcal{A}.
\]
\end{definition}

The following characterization provides practical criteria for verifying approximate $\varphi$-semi contractibility. See Also \cite[Proposition 4.2]{ASW}.

\begin{proposition}
Let $\mathcal{A}$ be a Banach algebra and $\varphi\in\Delta(\mathcal{A})$. Then the following statements are equivalent:
\begin{enumerate}
\item $\mathcal{A}$ is approximately $\varphi$-semi contractible.\label{p2:1}
\item There exist nets $(m_\alpha)$ and $(n_\alpha)$ in $\mathcal{A}$ with $\varphi(m_\alpha)=\varphi(n_\alpha)=1$ and $\|am_\alpha-\varphi(a) n_\alpha\|\to 0$ for all $a\in \mathcal{A}$.\label{p2:2}
\item There exist nets $(m_\alpha)$ and $(n_\alpha)$ in $\mathcal{A}$ with $\varphi(m_\alpha) \to 1$, 
$\varphi(n_\alpha)\to 1$ and $\|am_\alpha-\varphi(a) n_\alpha\|\to 0$ for all $a\in \mathcal{A}$.\label{p2:3}
\end{enumerate}
\end{proposition}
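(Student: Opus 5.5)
We prove the cycle (\ref{p2:1})$\Rightarrow$(\ref{p2:2})$\Rightarrow$(\ref{p2:3})$\Rightarrow$(\ref{p2:1}), modelling each step on the proof of Theorem \ref{22}.

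\emph{(\ref{p2:1})$\Rightarrow$(\ref{p2:2}).} As in Theorem \ref{22}, regard $\ker\varphi$ as a Banach $\mathcal{A}$-bimodule with left action $a\cdot x=ax$ and right action $x\cdot a=\varphi(a)x$. Fix $u\in\mathcal{A}$ with $\varphi(u)=1$. The map $D(a)=au-\varphi(a)u$ is a continuous derivation into $\ker\varphi$; this computation was already done in Theorem \ref{22}. By approximate semi-innerness there are nets $(x_\alpha),(y_\alpha)$ in $\ker\varphi$ with
\[
D(a)=\lim_\alpha\bigl(ax_\alpha-\varphi(a)y_\alpha\bigr)\qquad(a\in\mathcal{A}).
\]
Put $m_\alpha=u-x_\alpha$ and $n_\alpha=u-y_\alpha$. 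Then $\varphi(m_\alpha)=\varphi(n_\alpha)=1$, and
\[
am_\alpha-\varphi(a)n_\alpha=D(a)-\bigl(ax_\alpha-\varphi(a)y_\alpha\bigr)\to0 .
\]

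\emph{(\ref{p2:2})$\Rightarrow$(\ref{p2:3}).} This is trivial.

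\emph{(\ref{p2:3})$\Rightarrow$(\ref{p2:1}).} Let $X$ have right action $x\cdot a=\varphi(a)x$, and let $D:\mathcal{A}\to X$ be a continuous derivation. Exactly as in Theorem \ref{22},
\[
D(am_\alpha)=a\cdot D(m_\alpha)+\varphi(m_\alpha)D(a).
\]
Replacing $am_\alpha$ by $\varphi(a)n_\alpha$ and using continuity of $D$, we get $D(am_\alpha)-\varphi(a)D(n_\alpha)\to0$. Since $\varphi(m_\alpha)\to1$, it follows that
\[
D(a)=\lim_\alpha\bigl(\varphi(a)D(n_\alpha)-a\cdot D(m_\alpha)\bigr)
=\lim_\alpha\bigl(a\cdot(-D(m_\alpha))-(-D(n_\alpha))\cdot a\bigr).
\]
So $D$ is approximately semi-inner with the nets $-D(m_\alpha)$ and $-D(n_\alpha)$.

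The main obstacle is this last step. One must make sure that the factor $\varphi(m_\alpha)$ multiplying $D(a)$ does not spoil the limit. This is harmless, since $D(a)$ is fixed and $\varphi(m_\alpha)\to1$, so $(1-\varphi(m_\alpha))D(a)\to0$. No boundedness of the nets is needed, because for each fixed $a$ we only use $\|D\|\,\|am_\alpha-\varphi(a)n_\alpha\|\to0$.
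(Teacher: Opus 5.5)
Your proof is correct and follows essentially the same route as the paper. For (1)$\Rightarrow$(2) you use the same derivation $D(a)=au-\varphi(a)u$ into $\ker\varphi$. For (3)$\Rightarrow$(1) you combine the same identity $D(am_\alpha)=a\cdot D(m_\alpha)+\varphi(m_\alpha)D(a)$ with $\|D\|\,\|am_\alpha-\varphi(a)n_\alpha\|\to 0$ and $\varphi(m_\alpha)\to 1$, which gives the same witnessing nets $-D(m_\alpha)$ and $-D(n_\alpha)$.
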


\begin{proof}
(\ref{p2:1})$\Rightarrow$(\ref{p2:2}) Let  $\mathcal{A}$ be approximately $\varphi$-semi-contractible. Again we find that $\ker\varphi$ is a Banach $\mathcal{A}$-bimodule with the following actions
$$a\cdot x=ax,\ \ x\cdot a=\varphi(a)x\ \ (a\in \mathcal{A}, x\in\ker\varphi).$$
Let $u\in \mathcal{A}$ be such that $\varphi(u)=1$. Define $D:\mathcal{A}\to \ker\varphi$ by $D(a)=au-\varphi(a)u$. Clearly $D$ is a derivation. 
 Therefore there are nets $(x_\alpha)$ and $(y_\alpha)$ in $\ker\varphi$ such that
$$D(a)=\lim_\alpha(a\cdot x_\alpha-y_\alpha\cdot a)=\lim_\alpha (a\cdot x_\alpha-\varphi(a)y_\alpha),\ \ (a\in \mathcal{A}).$$
Now consider $m_\alpha=u-x_\alpha$ and $n_\alpha=u-y_\alpha$. Then $\varphi(m_\alpha)=\varphi(n_\alpha)=1$ and 
for each $a\in\mathcal{A}$
\begin{align*}
\|am_\alpha-\varphi(a)n_\alpha\|& = \|a(u-x_\alpha)-\varphi(a)(u-y_\alpha)\| \\
&= \|(au-\varphi(a)u)-(ax_\alpha-\varphi(a)y_\alpha\| \\
&= \| D(a)-(a\cdot x_\alpha-y_\alpha\cdot a) \|\to 0.
\end{align*}

(\ref{p2:2})$\Rightarrow$(\ref{p2:3}) Immediate.

(\ref{p2:3})$\Rightarrow$(\ref{p2:1}) Suppose that $D:\mathcal{A}\to X$ is a derivation for some Banach $\mathcal{A}$-bimodule $X$ with the right action $x\cdot a=\varphi(a) x$. For every $a\in \mathcal{A}$ we have 
\begin{align*}
\|D(a)  +a\cdot D(m_\alpha) &-\varphi(a) D(n_\alpha)\| \\
&= \|D(a)+D(a\cdot m_\alpha)-D(a)\cdot m_\alpha   -\varphi(a)D(n_\alpha)\| \\
&= \| D(a)+ D(a\cdot m_\alpha)-\varphi(m_\alpha) D(a) - D(\varphi(a) n_\alpha)\|\\
&\leq  \| D(a) - \varphi(m_\alpha) D(a) \| + \| D(a\cdot m_\alpha-\varphi(a) n_\alpha)\| \\
&\leq  \|D\| \| a-\varphi(m_\alpha) a\| + \| D\| \|a\cdot m_\alpha-\varphi(a) n_\alpha\| \to 0.
\end{align*}
It follows that $D(a)=\lim_\alpha (a\cdot (-D(m_\alpha)) - (-D(n_\alpha))\cdot a  )$ and so  $D$ is approximately 
semi-inner.
\end{proof}

The approximate version also behaves well with respect to tensor products:

\begin{theorem}
Let $\mathcal{A}$ and $\mathcal{B}$ be Banach algebras with $\varphi \in \Delta(\mathcal{A})$ and $\psi \in \Delta(\mathcal{B})$. Then $\mathcal{A} \widehat{\otimes} \mathcal{B}$ is approximately $\varphi \otimes \psi$-semi contractible if and only if $\mathcal{A}$ is approximately $\varphi$-semi contractible and $\mathcal{B}$ is approximately $\psi$-semi contractible.
\end{theorem}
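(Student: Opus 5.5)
The plan is to work throughout with the net characterization in the preceding Proposition (condition (\ref{p2:2})), and to imitate the proof of the exact tensor product theorem.

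For the forward direction, suppose nets $(m_\alpha),(n_\alpha)$ in $\mathcal{A}\widehat{\otimes}\mathcal{B}$ satisfy $\varphi\otimes\psi(m_\alpha)=\varphi\otimes\psi(n_\alpha)=1$ and $\|c\,m_\alpha-\varphi\otimes\psi(c)n_\alpha\|\to0$ for all $c\in\mathcal{A}\widehat{\otimes}\mathcal{B}$.

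1. Use the contractive map $\Upsilon:\mathcal{A}\widehat{\otimes}\mathcal{B}\to\mathcal{A}$, $\Upsilon(a\otimes b)=\psi(b)a$. It satisfies $\varphi\circ\Upsilon=\varphi\otimes\psi$ and $\|\Upsilon\|\le\|\psi\|\le 1$.
2. The key identity to check is $\Upsilon\bigl((a\otimes b_0)c\bigr)=\psi(b_0)\,a\,\Upsilon(c)$. It holds on elementary tensors, hence everywhere by linearity and continuity.
3. Fix $b_0$ with $\psi(b_0)=1$ and put $m_\alpha^{\mathcal{A}}=\Upsilon(m_\alpha)$, $n_\alpha^{\mathcal{A}}=\Upsilon(n_\alpha)$. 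Then $\varphi(m_\alpha^{\mathcal{A}})=\varphi(n_\alpha^{\mathcal{A}})=1$.
4. For each $a\in\mathcal{A}$,
$$\|a m_\alpha^{\mathcal{A}}-\varphi(a)n_\alpha^{\mathcal{A}}\|=\|\Upsilon\bigl((a\otimes b_0)m_\alpha-\varphi\otimes\psi(a\otimes b_0)n_\alpha\bigr)\|\le\|(a\otimes b_0)m_\alpha-\varphi(a)n_\alpha\|\to0.$$
So $\mathcal{A}$ is approximately $\varphi$-semi contractible. The symmetric map $a\otimes b\mapsto\varphi(a)b$ gives the result for $\mathcal{B}$.

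For the converse, take nets $(m_\alpha^{\mathcal{A}}),(n_\alpha^{\mathcal{A}})$ for $\mathcal{A}$ and $(m_\beta^{\mathcal{B}}),(n_\beta^{\mathcal{B}})$ for $\mathcal{B}$. Index by the product directed set $\gamma=(\alpha,\beta)$ and set $m_\gamma=m_\alpha^{\mathcal{A}}\otimes m_\beta^{\mathcal{B}}$, $n_\gamma=n_\alpha^{\mathcal{A}}\otimes n_\beta^{\mathcal{B}}$. Clearly $\varphi\otimes\psi(m_\gamma)=\varphi\otimes\psi(n_\gamma)=1$.

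For an elementary tensor $a\otimes b$, telescope:
$$am_\alpha^{\mathcal{A}}\otimes bm_\beta^{\mathcal{B}}-\varphi(a)\psi(b)\,n_\alpha^{\mathcal{A}}\otimes n_\beta^{\mathcal{B}}
=(am_\alpha^{\mathcal{A}}-\varphi(a)n_\alpha^{\mathcal{A}})\otimes bm_\beta^{\mathcal{B}}+\varphi(a)\,n_\alpha^{\mathcal{A}}\otimes(bm_\beta^{\mathcal{B}}-\psi(b)n_\beta^{\mathcal{B}}).$$

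This splitting exposes the main obstacle. The first term has norm at most $\|am_\alpha^{\mathcal{A}}-\varphi(a)n_\alpha^{\mathcal{A}}\|\,\|b\|\,\|m_\beta^{\mathcal{B}}\|$, and the second involves $\|n_\alpha^{\mathcal{A}}\|$. These nets need not be bounded, so a plain product net need not work. I would handle this by a diagonal choice of indices rather than by a limit over the product net:

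1. Fix a finite set $F\subset\mathcal{A}\times\mathcal{B}$ and $\varepsilon>0$.
2. First choose $\beta$ so that $\|bm_\beta^{\mathcal{B}}-\psi(b)n_\beta^{\mathcal{B}}\|$ is small for the $b$'s in $F$, relative to a bound on $|\varphi(a)|$.
3. The difficulty is that this $\beta$ then multiplies $\|n_\alpha^{\mathcal{A}}\|$, which is not yet controlled. To avoid that circularity, rewrite the second term using $\varphi(a)n_\alpha^{\mathcal{A}}=am_\alpha^{\mathcal{A}}-(am_\alpha^{\mathcal{A}}-\varphi(a)n_\alpha^{\mathcal{A}})$, and expand both terms in the alternative order $\alpha$ first, then $\beta$. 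Equivalently, use $a\,m^{\mathcal{A}}_\alpha\approx\varphi(a)n^{\mathcal{A}}_\alpha$ with $\beta$ fixed first, so that only the fixed quantities $\|m_\beta^{\mathcal{B}}\|$ and $\|n_\beta^{\mathcal{B}}\|$ appear, and absorb the leftover through the $\alpha$-choice.
4. If this ordering still fails, fall back on condition (\ref{p2:3}). There the normalization is relaxed, so $\mathcal{B}$'s nets can be rescaled or replaced, for instance by $b_0m_\beta^{\mathcal{B}}$-type elements.

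Indexing the resulting elements by pairs $(F,\varepsilon)$ gives a net that works on elementary tensors, hence on their span. Passing from the span to general elements of $\mathcal{A}\widehat{\otimes}\mathcal{B}$ by density also requires uniform control of $\|m_\gamma\|$. I expect this extension step, together with step 3, to be the genuinely delicate point. I would first look for bounded nets there, for example via $\varphi$-amenability-type normalizations, and otherwise settle for convergence on elementary tensors.
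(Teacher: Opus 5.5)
Your forward direction is correct and is the paper's argument ($\Upsilon(a\otimes b)=\psi(b)a$, $\varphi\circ\Upsilon=\varphi\otimes\psi$, and $\Upsilon((a\otimes b_0)c)=a\,\Upsilon(c)$ when $\psi(b_0)=1$). The converse, however, is not proved, and the repairs you sketch in steps 3--4 cannot work as stated. For product candidates $M=m_\alpha^{\mathcal{A}}\otimes m_\beta^{\mathcal{B}}$, $N=n_\alpha^{\mathcal{A}}\otimes n_\beta^{\mathcal{B}}$, the obstruction is exact and does not depend on how you telescope. Take $x\in\ker\varphi$ and $y\in\ker\psi$. Then the $N$-terms vanish, and since the projective norm is a cross norm,
\[
\|(x\otimes b)M\|=\|xm_\alpha^{\mathcal{A}}\|\,\|bm_\beta^{\mathcal{B}}\|,\qquad \|(a\otimes y)M\|=\|am_\alpha^{\mathcal{A}}\|\,\|ym_\beta^{\mathcal{B}}\|.
\]
Making the first small forces $\alpha$ to be chosen after $\beta$; making the second small forces $\beta$ to be chosen after $\alpha$. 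If the nets are unbounded (in a unital $\mathcal{B}$ with $b=1$ one has $\|bm_\beta^{\mathcal{B}}\|=\|m_\beta^{\mathcal{B}}\|$), a diagonal choice needs a quantitative relation between the decay of $\|xm_\alpha^{\mathcal{A}}\|$ and the growth of $\|m_\alpha^{\mathcal{A}}\|$, and likewise in $\mathcal{B}$. The hypotheses provide no such relation.

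Your rewriting $\varphi(a)n_\alpha^{\mathcal{A}}=am_\alpha^{\mathcal{A}}-(am_\alpha^{\mathcal{A}}-\varphi(a)n_\alpha^{\mathcal{A}})$ only trades $\|n_\alpha^{\mathcal{A}}\|$ for the equally unbounded $\|am_\alpha^{\mathcal{A}}\|$. With $\beta$ fixed, the leftover $\|am_\alpha^{\mathcal{A}}\|\,\|bm_\beta^{\mathcal{B}}-\psi(b)n_\beta^{\mathcal{B}}\|$ gets worse, not better, as $\alpha$ grows. Passing to condition (3), or to elements like $b_0m_\beta^{\mathcal{B}}$, does not change the norm growth. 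The density step has the same circularity: the error map $c\mapsto cM-(\varphi\otimes\psi)(c)N$ is only bounded by $\|M\|+\|N\|$.

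For comparison, the paper's converse uses exactly the plain product net $M_{\alpha\beta}=m_\alpha^{\mathcal{A}}\otimes m_\beta^{\mathcal{B}}$, $N_{\alpha\beta}=n_\alpha^{\mathcal{A}}\otimes n_\beta^{\mathcal{B}}$ with your telescoping bound. It checks only elementary tensors and simply asserts that $\|am_\alpha^{\mathcal{A}}-\varphi(a)n_\alpha^{\mathcal{A}}\|\,\|bm_\beta^{\mathcal{B}}\|+\|\varphi(a)n_\alpha^{\mathcal{A}}\|\,\|bm_\beta^{\mathcal{B}}-\psi(b)n_\beta^{\mathcal{B}}\|\to 0$. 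That step tacitly uses $\sup_\beta\|bm_\beta^{\mathcal{B}}\|<\infty$ and $\sup_\alpha\|n_\alpha^{\mathcal{A}}\|<\infty$. Reaching general $c\in\mathcal{A}\widehat{\otimes}\mathcal{B}$ would also need uniform bounds on $\|M_{\alpha\beta}\|$ and $\|N_{\alpha\beta}\|$. So you have located a genuine weak point, but you have not supplied the missing ingredient. That ingredient is some boundedness hypothesis, e.g.\ bounded nets in condition (2); under it the plain product net already works and no diagonal argument is needed. As written, your proposal proves only the forward implication.
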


\begin{proof}
Suppose that $\mathcal{A} \widehat{\otimes} \mathcal{B}$ is approximately $\varphi \otimes \psi$-semi contractible.  
Then there are nets $(M_\alpha)$ and $(N_\alpha)$ in $\mathcal{A} \widehat{\otimes} \mathcal{B}$ such that
$$ (\varphi \otimes \psi)(M_\alpha) = (\varphi \otimes \psi)(N_\alpha) = 1$$
and
$$ \| (a \otimes b) M_\alpha - (\varphi \otimes \psi)(a \otimes b) N_\alpha \| \to 0 $$
for all $a \in \mathcal{A}$ and $b \in \mathcal{B}$.

Define the linear map $\Upsilon : \mathcal{A} \widehat{\otimes} \mathcal{B} \to \mathcal{A}$ by 
$ \Upsilon(a \otimes b) = a \, \psi(b) $. Let $m_\alpha = \Upsilon (M_\alpha)$ and $n_\alpha = \Upsilon(N_\alpha)$.  
Since $\varphi \circ \Upsilon = \varphi \otimes \psi$, one can easily see that
$$ \varphi(m_\alpha) = \varphi(n_\alpha) = 1.$$
Choose $b_0 \in \mathcal{B}$ such that $\psi(b_0) = 1$.  
Thus, for every $a \in \mathcal{A}$ we have
\begin{align*}
\| a m_\alpha - \varphi(a) n_\alpha \| & = 
\| \Upsilon (a \otimes b_0) \Upsilon(M_\alpha) - \varphi \otimes \psi (a \otimes b_0) \Upsilon(N_\alpha) \| \\
 & = \| \Upsilon(a \otimes b_0\, M_\alpha) - \Upsilon((\varphi \otimes \psi) (a \otimes b_0) N_\alpha) \| \\
& \le \| \Upsilon \| \, \| (a \otimes b_0) M_\alpha - (\varphi \otimes \psi)(a \otimes b_0) N_\alpha \| \to 0. 
\end{align*}

Therefore, $\mathcal{A}$ is approximately $\varphi$--semi contractible.  
By a similar argument, we can show that $\mathcal{B}$ is approximately $\psi$--semi contractible.

Conversely, suppose that $\mathcal{A}$ is an approximately $\varphi$-semi contractible Banach algebra and $\mathcal{B}$ is an approximately $\psi$-semi contractible Banach algebra. 
Then there are nets $(m_\alpha^\mathcal{A})$ and $(n_\alpha^\mathcal{A})$ in $\mathcal{A}$ and $(m_\beta^\mathcal{B})$ and $(n_\beta^\mathcal{B})$ in $\mathcal{B}$ such that 
$$ \varphi(m_\alpha^\mathcal{A}) = \varphi(n_\alpha^{\mathcal{A}}) = 1, \quad \psi(m_\beta^\mathcal{B}) = \psi(n_\beta^{\mathcal{B}}) = 1, $$
and 
$$ \| a m_\alpha^\mathcal{A} - \varphi(a) n_\alpha^\mathcal{A} \| \to 0, \quad 
\| b m_\beta^\mathcal{B} - \psi(b)  n_\beta^\mathcal{B} \| \to 0. $$

Define 
$ M_{\alpha \beta } = m_\alpha^\mathcal{A} \otimes m_\beta^\mathcal{B}$ and $
N_{\alpha \beta } = n_\alpha^\mathcal{A} \otimes n_\beta^\mathcal{B}$. 
Then 
$ \varphi \otimes \psi (M_{\alpha \beta }) = \varphi \otimes \psi (N_{\alpha \beta }) = 1$,  
and for every $a \in \mathcal{A}$ and $b \in \mathcal{B}$ we have
\begin{align*}
\| a \otimes b \, M_{\alpha \beta }  - \varphi \widehat{\otimes}\psi (a\otimes b) N_{\alpha \beta } \| 
&  = \| ( a \otimes b) \, m_\alpha^\mathcal{A} \otimes m_\beta^\mathcal{B} - \varphi\widehat{\otimes}\psi (a\otimes b) n_\alpha^\mathcal{A} \otimes n_\beta^\mathcal{B} \| \\
& = \| a  m_\alpha^\mathcal{A} \otimes b m_\beta^\mathcal{B} - 
\varphi(a) n_\alpha^\mathcal{A} \otimes b m_\beta^\mathcal{B} + \\
&\quad   \varphi(a) n_\alpha^\mathcal{A} \otimes b m_\beta^\mathcal{B} - 
 \varphi(a) n_\alpha^\mathcal{A} \otimes \psi(b) n_\beta^\mathcal{B} \| \\
& \leq \| a m_\alpha^\mathcal{A} - \varphi(a) n_\alpha^\mathcal{A} \| \, \| b m_\beta^\mathcal{B} \| \\
& \quad + \| \varphi(a) n_\alpha^\mathcal{A} \| \, \| b m_\beta^\mathcal{B} - \psi(b) n_\beta^\mathcal{B} \| \to 0.
\end{align*}
Therefore, $\mathcal{A} \widehat{\otimes} \mathcal{B}$ is approximately $\varphi \otimes \psi$-semi contractible.
\end{proof}

\end{document}